\newcommand\arXiv[1]{arXiv:\href{http://arXiv.org/abs/#1}{#1}}
\newtheorem{theorem}{Theorem}[section]
\newtheorem{lemma}[theorem]{Lemma}
\newtheorem{definition}[theorem]{Definition}
\newtheorem{proposition}[theorem]{Proposition}
\newtheorem{observation}[theorem]{Observation}
\newenvironment{maintheorem}[1]{\maintheorems}{\endmaintheorems}
\theoremstyle{definition}
\newtheorem{remark}[theorem]{Remark}
\numberwithin{equation}{section}
\newcommand\R{{\mathbb{R}}}
\newcommand\F{{\mathbb{F}}}
\newcommand\Z{{\mathbb{Z}}}
\newcommand\Q{{\mathbb{Q}}}
\newcommand\N{{\mathbb{N}}}
\newcommand{\abs}[1]{\left\lvert#1\right\rvert}
\DeclareMathOperator{\Rank}{rank}
\newcommand{\iso}{\cong}
\newcommand{\coloneq}{\mathrel{\mathop:}\mkern-1.2mu=}
\DeclareMathOperator{\tensorrank}{tensor-rank}
\DeclareMathOperator{\slicerank}{slice-rank}
\DeclareMathOperator{\trianglerank}{triangle-rank}
\DeclareMathOperator{\support}{support}
\renewcommand{\a}{\mathbf{a}}
\renewcommand{\b}{\mathbf{b}}
\renewcommand{\c}{\mathbf{c}}
\newcommand{\x}{\mathbf{x}}
\newcommand{\y}{\mathbf{y}}
\newcommand{\z}{\mathbf{z}}
\DeclareMathOperator{\instability}{instability}
\DeclareMathOperator{\avg}{avg}
\DeclareMathOperator{\SL}{SL}
\DeclareMathOperator{\bound}{bd}
\newcommand{\E}{\mathbb{E}}
\DeclareMathOperator{\lcm}{lcm}
\begin{document}

\begin{frontmatter}[classification=text]

\author[Blasiak]{Jonah Blasiak\thanks{Supported by NSF grant DMS-14071174.}}
\author[Church]{Thomas Church\thanks{Supported by NSF grant
DMS-1350138, the Alfred P.\ Sloan Foundation, and the Frederick E.\ Terman
Fellowship.}}
\author[Cohn]{Henry Cohn}
\author[Grochow]{Joshua A.\ Grochow\thanks{Supported by a Santa Fe Institute Omidyar Fellowship.}}
\author[Naslund]{Eric Naslund\thanks{Supported by Ben Green's ERC Starting Grant 279438, Approximate Algebraic Structure and Applications.}}
\author[Sawin]{William F.\ Sawin\thanks{Supported
by NSF grant DGE-1148900, Dr.~Max R\"{o}ssler, the Walter Haefner Foundation,
and the ETH Z\"{u}rich Foundation.}}
\author[Umans]{Chris Umans\thanks{Supported by NSF grant CCF-1423544 and a
Simons Foundation Investigator grant.}}

\begin{abstract}
In 2003, Cohn and Umans described a framework for proving upper bounds on
the exponent $\omega$ of matrix multiplication by reducing matrix
multiplication to group algebra multiplication, and in 2005 Cohn,
Kleinberg, Szegedy, and Umans proposed specific conjectures for how to
obtain $\omega=2$. In this paper  we rule out obtaining $\omega=2$  in
this framework from abelian groups of bounded exponent. To do this we
bound the size of tricolored sum-free sets in such groups, extending the
breakthrough results of Croot, Lev, Pach, Ellenberg, and Gijswijt on cap
sets. As a byproduct of our proof, we show that a variant of tensor rank
due to Tao gives a quantitative understanding of the notion of unstable
tensor from geometric invariant theory.
\end{abstract}
\end{frontmatter}

\section{Introduction}

A \emph{cap set} is a subset of $\F_3^n$ containing no lines; equivalently,
if $u$, $v$, and $w$ belong to the set, then $u+v+w=0$ if and only if
$u=v=w$. In a remarkable pair of recent papers \cite{CLP,EG}, Croot, Lev, and
Pach~\cite{CLP} introduced a powerful new technique, which Ellenberg and
Gijswijt~\cite{EG} used to prove that cap sets in $\F_3^n$ are bounded in
size by $O(c^n)$ with $c<3$, thus settling a long-standing open question. The
results of Ellenberg and Gijswijt similarly bound the size of subsets of
$\F_p^n$ that contain no three-term arithmetic progressions, as well as
certain more general sum-free sets.

Via the connections established earlier by Alon, Shpilka, and Umans~\cite{ASU}, the
cap set bounds prove the Erd\H os--Szemer\'edi sunflower conjecture
\cite{ES} and disprove the Coppersmith--Winograd ``no three disjoint
equivoluminous subsets'' conjecture \cite{CW}, which was proposed as a means
to show that the exponent $\omega$ of matrix multiplication is $2$. Alon
\emph{et al}.\ also showed that a \emph{tricolored} version  of the cap set
bounds would disprove the ``strong Uniquely Solvable Puzzle (USP)''
conjecture of Cohn, Kleinberg, Szegedy, and Umans~\cite{CKSU}, which was
another proposed approach to prove $\omega =2$ in the group-theoretic
framework of Cohn and Umans~\cite{CU}.
The strong USP conjecture is situated in
the context of a broader family of conjectures from \cite{CKSU}, which are
all potential means to prove $\omega = 2$. These conjectures all assert the
existence of certain large ``simultaneous triple product property'' (STPP)
constructions. An STPP construction is a collection of triples of subsets
$A_i,B_i,C_i\subseteq H$ inside a group $H$, satisfying certain conditions
(see Definition~\ref{def:STPP}). The approach of \cite{CKSU} shows that when
$H$ is abelian, any STPP construction implies the inequality
\begin{equation} \label{eq:ASI-STPP-ab-intro}
\sum_i (\abs{A_i}\abs{B_i}\abs{C_i})^{\omega/3} \le \abs{H}.
\end{equation}
If the sets involved are large enough, this yields a nontrivial bound on
$\omega$, and \cite{CKSU} showed how a family of sufficiently large STPP
constructions would imply $\omega=2$.

This paper contains two main results. The first is a bound on the size of
tricolored sum-free sets in abelian groups of bounded exponent. A tricolored
sum-free set is a generalization of a three-term progression-free set (a set
whose elements satisfy $u+v=2w \iff u=v=w$) in which the elements $u$, $v$,
and $w$ range over three \emph{different} subsets (see
Definition~\ref{def:tri-sum-free}). The \emph{exponent} of a finite group is
the least common multiple of the orders of all its elements; if a finite
abelian group is generated by elements of order at most $m$, then its
exponent is at most $\lcm(1,2,\dots,m) = e^{m(1+o(1))}$.  (This asymptotic
formula for $\lcm(1,2,\dots,m)$ is a variant of the prime number theorem.
See, for example, Theorem~11 in
\cite{RosserSchoenfeldApproximateFormulasForPrimes} for a stronger bound on
$\psi(m) = \log \lcm(1,2,\dots,m)$.)

\begin{maintheorem}{A}
\label{thm:boundedexponent} There exists an absolute constant
$\varepsilon=\frac{1}{2}\log((2/3)2^{2/3}) = 0.02831\ldots$ such that
if $H$ is an abelian group generated by elements of order at most $m$, then
every tricolored sum-free set in $H$ has size at most $3\cdot
\abs{H}^{1-\frac{\varepsilon}{m}}$.
\end{maintheorem}

In the case when $H$ is a power of a single cyclic group $\Z/p^k\Z$, we obtain a sharper bound.

\begin{maintheorem}{A$'$}
\label{thm:Zqn} There exists an absolute constant
$\delta=2\varepsilon=  0.05663\ldots$ such that
if $H\iso (\Z/q\Z)^n$ for some prime power $q$, then
every tricolored sum-free set in $H$ has size at most $3\cdot
\abs{H}^{1-\frac{\delta}{\log q}}$.
\end{maintheorem}

These theorems extend the Ellenberg--Gijswijt~\cite{EG} result in two
directions: from progression-free sets to tricolored sum-free
sets\footnote{Noga Alon independently observed that the Ellenberg--Gijswijt
result extends to tricolored sum-free sets in $\F_p^n$.} and from abelian
groups of prime exponent to abelian groups of bounded exponent. In
particular, Theorem~\ref{thm:boundedexponent} disproves the strong USP
conjecture via \cite{ASU}. Note that when $H$ has prime exponent $p$, the bound in
Theorem~\ref{thm:Zqn} of $3\cdot
\abs{H}^{1-\frac{\delta}{\log p}}$ agrees with the bound for progression-free sets obtained in~\cite{EG},
but with a slightly worse exponent as we have stated it uniformly in $p$.
It remains to be seen whether
$\frac{\varepsilon}{m}$ can be improved to
$\frac{c}{\log m}$ for general abelian groups of bounded exponent.
A generalization of the Ellenberg--Gijswijt result that handles progression-free
sets in abelian $p$-groups of bounded exponent was also obtained by Petrov in~\cite{Petrov}.

In the proof of Theorem~\ref{thm:boundedexponent}, we study a variant of
tensor rank due to Tao \cite{T}, which we call slice rank, and we show how
slice rank is related to a quantitative version of the notion of instability
from geometric invariant theory. We show that functions with
low slice rank are unstable, and conversely prove that a quantitative bound on
instability yields a bound on slice rank of tensor powers (see
Section~\ref{sec:instability}).

Our second main result generalizes \cite{ASU} by showing
that every STPP construction yields a large tricolored sum-free set.
Together these two facts show that it is impossible to prove $\omega=2$
using sets satisfying the simultaneous triple product property in  abelian
groups of bounded exponent:

\begin{maintheorem}{B} \label{thm:noSTTP}
For every $\ell\in \N$, there is an $\varepsilon_\ell>0$ such that no STPP
construction in any abelian group of exponent at most $\ell$ is large enough
to yield a bound better than $\omega\leq 2+\varepsilon_\ell$ via the
inequality~\eqref{eq:ASI-STPP-ab-intro}.
\end{maintheorem}

Note that all of the current best bounds on $\omega$ via the group-theoretic
approach (in \cite{CKSU}), as well as the current best bounds on $\omega$
which use the Coppersmith--Winograd approach \cite{CW, DS, W, L}, yield STPP
constructions whose underlying group is $(\Z/m\Z)^n$ for $m$ fixed.

However, our results \emph{do not} rule out achieving $\omega=2$ by using
STPP constructions over abelian groups.  Specifically, when the group has a
large cyclic factor, it indeed contains a large sum-free subset and thus the
constraints analyzed here are irrelevant. Furthermore, one can use
non-abelian groups or even more general objects such as association schemes
\cite{CU2}. Thus, our results serve to focus the search for group-theoretic
constructions, and certainly do \emph{not} rule them out as an approach to
achieving $\omega=2$.

\section{The simultaneous triple product property}

Recall that the \emph{exponent of matrix multiplication} is defined as
\[
\omega = \inf \{c : \text{the tensor rank of $n \times n$ matrix multiplication is at most } O(n^c) \text{ as } n \to \infty\}.
\]
In 2003, Cohn and Umans~\cite{CU} described a framework for proving upper
bounds on $\omega$ by reducing matrix multiplication to group algebra
multiplication. This reduction is carried out by means of a triple of
subsets satisfying the \emph{triple product property}:

\begin{definition}[{\cite[Definition~2.1]{CU}}]
Subsets $A,B,C$ of a group $G$ satisfy the \emph{triple product property} if
\[abc = 1 \iff a = b = c = 1\] for all $a\in A^{-1}A$, $b\in B^{-1}B$, and $c\in
C^{-1}C$, where $S^{-1}S$ denotes $\{s^{-1}s' : s,s' \in S\}$ for $S \subseteq G$.
\end{definition}

Such a triple of subsets realizes $\langle \abs{A},\abs{B},\abs{C}\rangle$
inside the group algebra of $G$. (Here $\langle m,n,p\rangle$ denotes the
matrix multiplication tensor for multiplying an $m\times n$ matrix by an
$n\times p$ matrix.) From this, letting $d_1,d_2,\dots$ be the character
degrees of $G$ (i.e., the dimensions of its irreducible representations), we
obtain the inequality
\[
(\abs{A} \abs{B} \abs{C})^{\omega/3}\leq \sum_i d_i^\omega
\]
by bounding the rank of group algebra multiplication \cite[Theorem~4.1]{CU}.
This inequality yields an upper bound for $\omega$ when $G$ and $A,B,C$ are
chosen appropriately.

In the later paper \cite{CKSU} several concrete routes to proving $\omega =
2$ were proposed. These proposals seemingly go beyond the framework of the
triple product property in various different ways; however as described in
\cite[\S7]{CKSU}, all of these constructions can be uniformly described
using the triple product property as follows. Several
independent matrix multiplications are realized via the triple product
property in the group algebra of a wreath product $H \wr S_m = H^m \rtimes
S_m$ using certain well-chosen subsets of a group $H$. This general
formulation is captured by the simultaneous triple product property:

\begin{definition}[{\cite[Definition~5.1]{CKSU}}] \label{def:STPP}
An \emph{STPP construction} is a collection of triples of subsets $A_i, B_i,
C_i$ of a group $H$ satisfying the \emph{simultaneous triple product
property} (STPP), which states that
\begin{enumerate}
\item for each $i$ the sets $A_i, B_i, C_i$ satisfy the triple product
    property, and
\item setting $S_i = A_iB_i^{-1}$, $T_j = B_jC_j^{-1}$, and $U_k =
    C_kA_k^{-1}$, \[ s_it_ju_k = 1 \quad\Rightarrow\quad i = j = k \] for
    all $s_i \in S_i$, $t_j \in T_j$, and $u_k \in U_k$.
\end{enumerate}
\end{definition}

Equivalently, $A_i, B_i, C_i \subseteq H$ satisfy the STPP if for all
$i,j,k$ and $s\in A_k$, $s'\in A_i$, $t\in B_i$, $t'\in B_j$, $u\in C_j$,
and $u'\in C_k$, we have
\[
s^{-1}s't^{-1}t'u^{-1}u'=1\quad\iff \quad i=j=k,\ \ s=s',\ \ t=t',\ \ u=u'.
\]

An STPP construction in $H$ realizes the tensor $\bigoplus_i \langle
\abs{A_i}, \abs{B_i}, \abs{C_i} \rangle$ and, via the asymptotic sum
inequality~\cite{S} or the use of a wreath product \cite[\S7]{CKSU}, yields
the fundamental inequality
\begin{equation}
\label{eq:ASI-STPP}
\sum_i (\abs{A_i}\abs{B_i}\abs{C_i})^{\omega/3} \le \sum_i d_i^\omega,
\end{equation}
where $d_1,d_2,\dots$ are the character degrees of $H$. For the rest of this
paper we will take $H$ to be abelian (with additive notation), in which case
this bound becomes the inequality \eqref{eq:ASI-STPP-ab-intro} of the
introduction:
\[
\sum_i (\abs{A_i}\abs{B_i}\abs{C_i})^{\omega/3} \le \abs{H}.
\]
All of the analysis of STPP constructions in the framework of \cite{CKSU} is based
on this inequality.

No single STPP construction can achieve $\omega=2$ via \eqref{eq:ASI-STPP}
(more generally, see \cite{CW2}), so we must consider families of
constructions in groups of growing size. To simplify the notation we
typically do not index such families explicitly (i.e., we refer to a group
$H$ rather than, say, $\{H_\alpha\}_{\alpha\in A}$).

Any STPP construction satisfies some simple ``packing bound'' inequalities,
which reflect the fact that the sets $S_i$ must be disjoint from each other, as must $T_i$
and $U_i$.  This disjointness follows immediately from the second condition
in the definition of the simultaneous triple product property. Furthermore,
since the sets $A_i, B_i, C_i$ satisfy the triple product property we must
have $\abs{S_i} = \abs{A_i}\abs{B_i}$, $\abs{T_i} = \abs{B_i}\abs{C_i}$, and
$\abs{U_i} = \abs{C_i}\abs{A_i}$.  Together these give the packing bounds:
\[
\sum_i \abs{A_i}\abs{B_i} \le \abs{H}, \qquad
\sum_i \abs{B_i}\abs{C_i} \le \abs{H}, \quad\text{and}\quad
\sum_i \abs{C_i}\abs{A_i} \le \abs{H}.
\]

\begin{definition}
We say that a family of STPP constructions with $\abs{H} \to \infty$ \emph{meets
the packing bound} if
\[
\sum_i \abs{A_i}\abs{B_i} \ge \abs{H}^{1 - o(1)}, \qquad
\sum_i \abs{B_i}\abs{C_i} \ge \abs{H}^{1 - o(1)}, \quad\text{and}\quad
\sum_i \abs{C_i}\abs{A_i} \ge \abs{H}^{1 - o(1)}.
\]
\end{definition}

A key observation is that meeting the packing bound is necessary for
achieving $\omega=2$:

\begin{lemma} \label{lemma:packing}
Any family of STPP constructions that does \emph{not} meet the packing bound
cannot imply $\omega=2$ via the inequality \eqref{eq:ASI-STPP-ab-intro}.
\end{lemma}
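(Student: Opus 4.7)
My plan is to argue directly that failing the packing bound prevents \eqref{eq:ASI-STPP-ab-intro} from forcing $\omega \to 2$. For a single construction, the strongest upper bound on $\omega$ that \eqref{eq:ASI-STPP-ab-intro} yields is $\omega \le c_*$, where $c_* \ge 2$ is the unique value satisfying $\sum_i (\abs{A_i}\abs{B_i}\abs{C_i})^{c_*/3} = \abs{H}$ (the left side is weakly increasing in $c_*$). Hence the inequality implying $\omega = 2$ along a family amounts to $c_* \to 2$ as $\abs{H} \to \infty$, and I aim to show that failing the packing bound keeps $c_*$ bounded strictly above $2$ along infinitely many members.

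The first step is an application of H\"{o}lder's inequality via the decomposition $(\abs{A_i}\abs{B_i}\abs{C_i})^{2/3} = (\abs{A_i}\abs{B_i})^{1/3}(\abs{B_i}\abs{C_i})^{1/3}(\abs{C_i}\abs{A_i})^{1/3}$ with exponents $3,3,3$, giving
\[
\sum_i (\abs{A_i}\abs{B_i}\abs{C_i})^{2/3} \le \biggl(\sum_i \abs{A_i}\abs{B_i}\biggr)^{\!1/3}\biggl(\sum_i \abs{B_i}\abs{C_i}\biggr)^{\!1/3}\biggl(\sum_i \abs{C_i}\abs{A_i}\biggr)^{\!1/3}.
\]
If the packing bound fails, then after possibly relabeling $A,B,C$ I may assume $\sum_i \abs{A_i}\abs{B_i} \le \abs{H}^{1-\eta}$ for some fixed $\eta > 0$ and infinitely many members. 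Combined with the universal packing bounds $\sum_i \abs{B_i}\abs{C_i},\,\sum_i \abs{C_i}\abs{A_i} \le \abs{H}$, this yields $\sum_i (\abs{A_i}\abs{B_i}\abs{C_i})^{2/3} \le \abs{H}^{1-\eta/3}$ along this subsequence.

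The second step converts this into a lower bound on $c_*$. Writing $c_* = 2 + \beta$ and applying the crude bound $\abs{A_i}\abs{B_i}\abs{C_i} \le \abs{H}^3$ to pull $\abs{H}^\beta$ out of each summand,
\[
\abs{H} \;=\; \sum_i (\abs{A_i}\abs{B_i}\abs{C_i})^{(2+\beta)/3} \;\le\; \abs{H}^\beta \sum_i (\abs{A_i}\abs{B_i}\abs{C_i})^{2/3} \;\le\; \abs{H}^{\beta + 1 - \eta/3},
\]
which forces $\beta \ge \eta/3$ and hence $c_* \ge 2 + \eta/3$ along the subsequence. So $c_*$ does not converge to $2$, and the family fails to imply $\omega = 2$. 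I do not anticipate a serious obstacle: the argument is essentially one application of H\"{o}lder combined with the trivial slack $\abs{A_i}\abs{B_i}\abs{C_i} \le \abs{H}^3$, which is the precise device that converts the additive gap $\eta/3$ in the exponent of $\abs{H}$ into a matching $\eta/3$ lower bound on $c_* - 2$.
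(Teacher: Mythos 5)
Your proof is correct and follows essentially the same route as the paper's: both hinge on splitting $(\abs{A_i}\abs{B_i}\abs{C_i})^{2/3}$ into the three pairwise factors $(\abs{A_i}\abs{B_i})^{1/3}(\abs{B_i}\abs{C_i})^{1/3}(\abs{C_i}\abs{A_i})^{1/3}$ and combining the failed packing inequality with the two automatic ones to obtain $\sum_i(\abs{A_i}\abs{B_i}\abs{C_i})^{2/3}\le\abs{H}^{1-\eta/3}$. The paper packages this via AM--GM with inserted powers of $\abs{H}$ and the inequality $\sum_i x_i^{\omega/3}\le\big(\sum_i x_i^{2/3}\big)^{\omega/2}$, whereas you use H\"older together with the crude bound $\abs{A_i}\abs{B_i}\abs{C_i}\le\abs{H}^3$, but the quantitative conclusion (the best obtainable exponent stays bounded away from $2$) is the same.
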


\begin{proof}
In our usual notation, if $\sum_i \abs{A_i}\abs{B_i} \le \abs{H}^{1 -
3\varepsilon}$ for some fixed $\varepsilon > 0$, then
\begin{align*}
\sum_i (\abs{A_i}\abs{B_i}\abs{C_i})^{\omega/3} &\le \Big(\sum_i (\abs{A_i}\abs{B_i}\abs{C_i})^{2/3}\Big)^{\omega/2}\\
&=\bigg(\sum_i \left(\abs{A_i}\abs{B_i}\abs{H}^{2\varepsilon}\cdot\abs{B_i}\abs{C_i}\abs{H}^{-\varepsilon}\cdot\abs{C_i}\abs{A_i}\abs{H}^{-\varepsilon}\right)^{1/3}\bigg)^{\omega/2}\\
&\le \bigg(\frac{\sum_i \abs{A_i}\abs{B_i}\abs{H}^{2\varepsilon} + \sum_i \abs{B_i}\abs{C_i}\abs{H}^{-\varepsilon} + \sum_i \abs{C_i}\abs{A_i}\abs{H}^{-\varepsilon}}{3}\bigg)^{\omega/2}\\
&\le \big( \abs{H}^{1-\varepsilon}\big)^{\omega/2},
\end{align*}
and so the strongest bound that can be obtained from
\eqref{eq:ASI-STPP-ab-intro} is $\omega\leq 2\cdot \frac{1}{1-\varepsilon}$,
which is bounded strictly away from the hoped-for $\omega=2$. The same holds
if either $\sum_i \abs{B_i}\abs{C_i} \le \abs{H}^{1 - 3\varepsilon}$ or $\sum_i
\abs{C_i}\abs{A_i} \le \abs{H}^{1 - 3\varepsilon}$.
\end{proof}

Both the strong USP conjecture~\cite[Conjecture~3.4]{CKSU} and the ``two
families'' conjecture~\cite[Conjecture~4.7]{CKSU} would, if true, yield STPP
constructions that meet the packing bound and moreover prove $\omega = 2$.
However, the STPP constructions produced by the strong USP conjecture have
underlying group $H = \F_3^n$, while the groups in the two families conjecture
need not have bounded exponent. Thus although Theorem~\ref{thm:noSTTP} disproves the
strong USP conjecture, it addresses only very special cases of
the two families conjecture.

\section{STPP constructions and tricolored sum-free sets}
\label{secmain:STPP-mcs}

\begin{definition}
\label{def:tri-sum-free}
A \emph{tricolored sum-free set} in an abelian group $H$ is a
3-dimensional perfect matching $M \subseteq S \times T \times U$ on a triple
of sets $S,T,U \subseteq H$, such that
\[
s+t+u = 0\text{ for all }(s,t,u) \in M
\]
and
\[
s+t+u \ne 0\text{ for all }(s,t,u) \in (S \times T \times U) \setminus M.
\]
The \emph{cardinality} of a tricolored sum-free set is the cardinality of
$M$.
\end{definition}

By a perfect matching $M\subseteq S\times T \times U$ we mean a subset whose
projection onto each of the three factors is a bijection. In other words,
given $s\in S$ there exist unique $t\in T$ and $u\in U$ such that
$(s,t,u)\in M$, and similarly for the other factors. The cardinality of the matching $M$ is
therefore equal to $\abs{S}$,  $\abs{T}$, and $\abs{U}$. Note that any
3-dimensional matching $M \subseteq H^3$ (not necessarily perfect, i.e.,
replacing ``bijection'' above with ``injection'') uniquely determines three
sets $S,T,U \subseteq H$ such that $M$ is a 3-dimensional perfect matching
on $S \times T \times U$.

If $S \subseteq \F_p^n$ contains no three-term arithmetic progressions, then
$M = \{(s,s,-2s): s \in S\}$ is a tricolored sum-free set. Similarly, for
any nonzero $\alpha,\beta,\gamma\in \F_p$ with $\alpha+\beta+\gamma=0$, we
obtain a tricolored sum-free set $M=\{(\alpha s,\beta s,\gamma s):s \in S\}$
whenever $S$ avoids nontrivial solutions to $\alpha s+\beta t+\gamma u=0$.

In this section, we show how to obtain a tricolored sum-free set from any
STPP construction in an abelian group. This allows us to prove that
Theorem~\ref{thm:boundedexponent} implies Theorem~\ref{thm:noSTTP}; we then
prove Theorem~\ref{thm:boundedexponent} in
Section~\ref{sec:boundedexponent}.

\subsection{STPP constructions imply tricolored sum-free sets}
\label{sec:STPP-mcs}

We will first construct slightly weaker objects we call border tricolored
sum-free sets, which are motivated by the notion of combinatorial
degeneration in the theory of border rank (see Definition~15.29 in
\cite{BCS}).  We will then use border tricolored sum-free sets to construct
genuine tricolored sum-free sets.

\begin{definition}
\label{def:border-tri-sum-free} A \emph{border tricolored sum-free set} in an
abelian group $H$ is a 3-dimensional perfect matching $M \subseteq S \times T
\times U$ on a triple of sets $S,T,U \subseteq H$ together with functions
$\alpha \colon S \to \Z$, $\beta \colon T \to \Z$, and $\gamma \colon U \to
\Z$ such that
\[
s+t+u = 0 \quad\text{and}\quad \alpha(s)+\beta(t)+\gamma(u)=0
\]
for all $(s,t,u) \in M$, while
\[
s+t+u \ne 0 \quad\text{or}\quad \alpha(s)+\beta(t)+\gamma(u)>0
\]
for all $(s,t,u) \in (S \times T \times U) \setminus M$. The
\emph{cardinality} of a border tricolored sum-free set is the cardinality of
$M$, and its \emph{range} is the maximum of $|\alpha(s)|$, $|\beta(t)|$, and
$|\gamma(u)|$ over $s \in S$, $t \in T$, and $u \in U$.
\end{definition}

One can reformulate the definition as follows: the sets $\{(s,\alpha(s)): s
\in S\}$, $\{(t,\beta(t)) : t \in T\}$, and $\{(u,\gamma(u)): u \in U\}$ form
a tricolored sum-free set in $H \times \Z$ (under the obvious $3$-dimensional
matching extending $M$), and they satisfy the positivity condition that
$\alpha(s)+\beta(t)+\gamma(u) \ge 0$ whenever $s+t+u=0$.

One of our main new contributions in this paper is the following
construction:

\begin{theorem} \label{thm:main}
Let $A_i, B_i, C_i \subseteq H$ be an STPP construction in an abelian group
$H$. Then there is a border tricolored sum-free set in $H$ of cardinality at
least
\[\sum_i \frac{\abs{A_i}\abs{B_i}\abs{C_i}}{\abs{A_i} + \abs{B_i} + \abs{C_i}}.\]
\end{theorem}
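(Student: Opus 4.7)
The plan is to construct the border tricolored sum-free set as a combinatorial degeneration, in the spirit of Definition~15.29 of \cite{BCS}, by embedding the STPP sets into $H\times\Z$ via integer weights attached to the elements of $A_i$, $B_i$, $C_i$.  For each $i$ I would choose injective labelings $\phi_i\colon A_i\to\Z$, $\psi_i\colon B_i\to\Z$, $\chi_i\colon C_i\to\Z$, with ranges that are disjoint across distinct $i$, and declare
\[
M_i \;=\; \{(a,b,c)\in A_i\times B_i\times C_i \;:\; \phi_i(a)+\psi_i(b)+\chi_i(c)=0\}
\]
to be the set of per-$i$ matching triples.  A pigeonhole argument over shifts of the labelings then yields $\abs{M_i}\ge \frac{\abs{A_i}\abs{B_i}\abs{C_i}}{\abs{A_i}+\abs{B_i}+\abs{C_i}}$, which is precisely the value of the fractional matching in the link hypergraph of the matrix multiplication tensor $\langle \abs{A_i},\abs{B_i},\abs{C_i}\rangle$.

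Given this matching, I would build the border tricolored sum-free structure by assigning, for each $(a,b,c)\in M_i$, the three points $(a-b,\chi_i(c))$, $(b-c,\phi_i(a))$, $(c-a,\psi_i(b))$ to $\tilde S,\tilde T,\tilde U\subseteq H\times\Z$ respectively, and taking $\tilde M$ to consist of all such triples.  Each triple sums to $(0,0)$ by the matching condition, and the TPP within each $A_i,B_i,C_i$, combined with the disjointness of the $\Z$-ranges across distinct $i$, implies that the three projections $\tilde M\to\tilde S,\tilde T,\tilde U$ are all injective, yielding a 3-dimensional perfect matching of the claimed total size.  Identifying $\tilde S, \tilde T, \tilde U$ with subsets of $H$ with $\Z$-valued weight functions recovers the border tricolored sum-free format of Definition~\ref{def:border-tri-sum-free}.

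The technical heart of the argument is verifying the border sum-free condition.  For any $(\tilde s,\tilde t,\tilde u)\in\tilde S\times\tilde T\times\tilde U$ not in $\tilde M$ but with $H$-sum zero, STPP forces the three source triples to share the same index $i$, and TPP then forces them into a ``triangular'' configuration $(a,b,c_1),\ (a_2,b,c),\ (a,b_3,c)\in M_i$ sharing an apex $(a,b,c)$.  A direct calculation using the three matching equations yields
\[
(\Z\text{-sum}) \;=\; -2\bigl(\phi_i(a)+\psi_i(b)+\chi_i(c)\bigr),
\]
and the triple is non-matching precisely when $(a,b,c)\notin M_i$, that is, when $\phi_i(a)+\psi_i(b)+\chi_i(c)\ne 0$.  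The hard part will be choosing the labelings so that this apex quantity is strictly negative whenever the apex falls outside $M_i$: this couples the matching-count lower bound on $\abs{M_i}$ to a global sign constraint on $\phi_i+\psi_i+\chi_i$ across all ``induced'' apices, and will require either a refined choice of labeling ranges or an averaging/random-shift argument to satisfy both demands simultaneously.
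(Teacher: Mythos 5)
Your setup coincides with the paper's almost step for step: the pigeonhole choice of the most popular label-sum, the matching $M_i$ of size at least $\abs{A_i}\abs{B_i}\abs{C_i}/(\abs{A_i}+\abs{B_i}+\abs{C_i})$, the use of the TPP to see that $M_i$ is a perfect matching, the use of the STPP to reduce the sum-free analysis to a single index $i$, and the identification of the ``apex'' configuration $(a,b,c_1),(a_2,b,c),(a,b_3,c)$ behind any zero-sum triple. Your computation that the resulting $\Z$-sum equals $-2\bigl(\phi_i(a)+\psi_i(b)+\chi_i(c)\bigr)$ is also correct. But the step you flag as ``the hard part'' is a genuine gap, and it cannot be closed by a refined choice of ranges or a random shift: the apex defects $\phi_i(a)+\psi_i(b)+\chi_i(c)$ are distributed on \emph{both} sides of the popular value, so with any linear assignment of weights the $\Z$-sum of a non-matching zero-sum triple will be strictly negative whenever the apex defect is positive, violating the border condition. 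No translation of the labelings fixes this, since shifting moves the whole distribution of defects without making all non-matching apices lie on one side.

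The paper's resolution is to abandon linearity in the defect and instead arrange the weights so that the $\Z$-sum equals the \emph{square} of the apex defect, $\bigl(\alpha_i(a)+\beta_i(b)+\gamma_i(c)-r_i\bigr)^2$, which is automatically nonnegative and vanishes exactly on matching triples. The point that makes this possible --- and the idea missing from your proposal --- is that the square, once expanded, splits into a sum of terms each depending on at most two of $a,b,c$ in a compatible pattern: since $s=a-b'$ determines the pair $(a,b')$, $t=b-c'$ determines $(b,c')$, and $u=c-a'$ determines $(c,a')$, every monomial in the expansion (e.g.\ $2\alpha_i(a)\beta_i(b)$, which equals $2\alpha_i(a)\beta_i(b')$ once $s+t+u=0$ forces $b=b'$) can be charged to exactly one of $\alpha(s)$, $\beta(t)$, $\gamma(u)$. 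Your construction only lets each of $s,t,u$ carry a single label (of the ``third'' element of its source triple), which is not enough information to encode a nonnegative function of the apex; letting each coordinate carry a quadratic expression in the two labels it determines is what completes the proof.
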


\begin{proof}
As in Definition~\ref{def:STPP} (but using additive notation), we will set
$S_i=A_i-B_i=\{a-b:a\in A_i,b\in B_i\}$, and similarly $T_i=B_i-C_i$ and $U_i=C_i-A_i$.

We will first construct a matching $M$ out of smaller matchings $M_i$ on
subsets of $S_i$, $T_i$, and $U_i$.  This matching will not quite be a
tricolored sum-free set in general, but we will define functions $\alpha$,
$\beta$, and $\gamma$ that repair any problems with it, so that it becomes a
border tricolored sum-free set.

To begin, let $n_i=\abs{A_i}$, $m_i = \abs{B_i}$, and $p_i = \abs{C_i}$, and
identify $A_i, B_i, C_i$ with $[n_i], [m_i], [p_i]$, respectively, via
bijections $\alpha_i, \beta_i, \gamma_i$. Let $r_i$ be the most frequently
occurring value in the multiset
\[\{x+y+z : (x,y,z) \in [n_i] \times [m_i] \times [p_i]\}.\]
Define $M_i \subseteq H^3$ as
\[
M_i = \{(a-b,b-c,c-a) : \mbox{$a \in A_i$, $b \in
B_i$, $c \in C_i$ such that $\alpha_i(a) + \beta_i(b) + \gamma_i(c) = r_i$} \}.
\]
The size of $M_i$ is the number of times $r_i$ occurs in the multiset above.
The number of distinct elements of the multiset is $n_i+m_i+p_i-2$; we can ignore the 2 and bound this by $\abs{A_i} + \abs{B_i} +
\abs{C_i}$. Since $r_i$ was chosen
to be the most frequent value,
\[
\abs{M_i} \geq \frac{\abs{A_i}\abs{B_i}\abs{C_i}}{\abs{A_i} + \abs{B_i} +
\abs{C_i}}.
\]

Each $M_i$ is a $3$-dimensional perfect matching: given the first coordinate
$a-b$, the triple product property determines $a$ and $b$ from $a-b$, and
then $\alpha_i(a) + \beta_i(b) + \gamma_i(c) = r_i$ determines $c$, and the
same is true for the other two coordinates.  We combine these matchings by
setting $M = \bigcup_i M_i$. Note that the sets $M_i$ are disjoint from each
other (since the sets $S_i$ are, for example), so $\abs{M} = \sum_i
\abs{M_i}$. All that remains is to define the functions $\alpha,\beta,\gamma$
so as to obtain a border tricolored sum-free set.

Let $S'_i$, $T'_i$, $U'_i$ be the projections of $M_i$ onto the three factors
of $H^3$.  In other words,
\[
S'_i = \{a-b : \mbox{$a \in A_i$, $b \in B_i$, and there exists $c \in C_i$ such that $\alpha_i(a) + \beta_i(b) + \gamma_i(c) = r_i$}\}\subseteq S_i,
\]
and $T'_i$ and $U'_i$ can be expressed similarly.  Let $S = \bigcup_i S'_i$,
$T=\bigcup_i T'_i$, and $U=\bigcup_i U'_i$, so that $M$ is a perfect matching between
$S$, $T$, and $U$.

To complete the proof, we must analyze when $s+t+u=0$ with $s \in S$, $t\in
T$, and $u \in U$. First, note that the simultaneous triple product property
implies that if $s_i+t_j+u_k=0$ with $s_i \in S_i$, $t_j \in T_j$, and $u_k
\in U_k$, then $i=j=k$.  Thus, we cannot obtain a sum of zero from $S'_i$,
$T'_j$, and $U'_k$ unless $i=j=k$, so we can analyze each matching $M_i$
individually, without worrying about how they might interact with each other.
More specifically, if for each $i$ we can find functions on $S'_i$,
$T'_i$, and $U'_i$ that make $M_i$ into a border tricolored sum-free
set, then this yields functions on $S$, $T$, and $U$ that make $M$ into
a border tricolored sum-free set.

Now consider $(s, t, u) \in S_i' \times T_i' \times U_i'$, with $s = a-b' \in
A_i - B_i$, $t = b-c' \in B_i - C_i$, and $u = c-a' \in C_i - A_i$. Note that
$s$ determines $a$ and $b'$, $t$ determines $b$ and $c'$, and $u$ determines
$c$ and $a'$ since $A_i, B_i, C_i$ satisfy the triple product property.
Furthermore, the triple product property tells us that
\[
s + t + u = 0 \quad\text{implies}\quad a = a',\  b = b',\  c = c'.
\]
However, that is not enough to conclude that $(s,t,u) \in M_i$, because it is
not necessarily the case that $\alpha_i(a)+\beta_i(b)+\gamma_i(c)=r_i$. To
address this issue, we will define functions $\alpha$, $\beta$, and $\gamma$
on $S'_i$, $T'_i$, and $U'_i$, respectively, so that
\[
\alpha(s) + \beta(t) + \gamma(u) = \big(\alpha_i(a) + \beta_i(b) + \gamma_i(c) -
r_i\big)^2
\]
whenever $s+t+u=0$. Specifically, we can take
\begin{align*}
\alpha(s) &= \alpha_i(a)^2 + 2 \alpha_i(a)\beta_i(b') - 2 \alpha_i(a) r_i + r_i^2,\\
\beta(t) &= \beta_i(b)^2 + 2 \beta_i(b)\gamma_i(c') - 2 \beta_i(b) r_i, \quad\text{and}\\
\gamma(u) &= \gamma_i(c)^2 + 2 \gamma_i(c)\alpha_i(a') - 2 \gamma_i(c) r_i.\\
\end{align*}
(We simply expand the square and assign each term to one of
$\alpha(s),\beta(t),\gamma(u)$ so that $\alpha(s)$ depends only on $a$ and
$b'$, $\beta(t)$ depends only on $b$ and $c'$, and $\gamma(u)$ depends only
on $c$ and $a'$.)

By construction, $\alpha(s) + \beta(t) + \gamma(u) \ge 0$ whenever $s+t+u=0$,
and $\alpha(s) + \beta(t) + \gamma(u)=0$ exactly when $(s,t,u) \in M_i$.
Thus, we have constructed a border tricolored sum-free set, as desired.
\end{proof}

\begin{lemma} \label{lem:unborder}
Let $H$ be an abelian group in which there is a border tricolored sum-free
set of cardinality $|M|$ and range $t$.  Then for each natural number $N$,
there exists a tricolored sum-free set in $H^N$ of cardinality at least
$|M|^N/(2Nt+1)^3$.
\end{lemma}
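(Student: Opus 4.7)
The plan is to start with the $N$-fold product $M^N\subseteq S^N\times T^N\times U^N$, which is automatically a 3-dimensional perfect matching of cardinality $\abs{M}^N$ in $H^N$, and then to prune it by a pigeonhole argument on the integer ``slack'' coordinates $\alpha,\beta,\gamma$ so as to promote the border condition into a genuine sum-free condition. The obstruction that $M^N$ itself does not resolve is that a triple $(\x,\y,\z)\in(S^N\times T^N\times U^N)\setminus M^N$ with $\x+\y+\z=0$ in $H^N$ can still occur; the border property only guarantees $\sum_i\bigl(\alpha(x_i)+\beta(y_i)+\gamma(z_i)\bigr)>0$ for such a triple, so the task is to arrange that this strict positivity contradicts $\x+\y+\z=0$.

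For each $(\x,\y,\z)\in M^N$ define
\[
A(\x)=\sum_{i=1}^N\alpha(x_i),\quad B(\y)=\sum_{i=1}^N\beta(y_i),\quad C(\z)=\sum_{i=1}^N\gamma(z_i),
\]
each an integer in $\{-Nt,\dots,Nt\}$. Since there are at most $(2Nt+1)^3$ possible values of $(a,b,c)$, pigeonhole produces some $(a^*,b^*,c^*)$ whose preimage $M^\star\subseteq M^N$ has size at least $\abs{M}^N/(2Nt+1)^3$. Let $S^\star,T^\star,U^\star$ be the projections of $M^\star$ onto the three factors of $H^N$; since $M^N$ is itself a perfect matching, $M^\star$ automatically restricts to a perfect 3-dimensional matching on $S^\star\times T^\star\times U^\star$ of the claimed cardinality.

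To verify that $M^\star$ is genuinely tricolored sum-free, I would take an arbitrary $(\x,\y,\z)\in S^\star\times T^\star\times U^\star$ with $\x+\y+\z=0$ and show $(\x,\y,\z)\in M^\star$. For each coordinate $i$, the border condition applied to $(x_i,y_i,z_i)\in S\times T\times U$ with $x_i+y_i+z_i=0$ gives $\alpha(x_i)+\beta(y_i)+\gamma(z_i)\ge 0$, with equality exactly when $(x_i,y_i,z_i)\in M$. Summing yields $a^*+b^*+c^*\ge 0$; but on the other hand every element of $M^N$ is a tuple of $M$-triples and hence has zero coordinatewise slack, so the image of $M^N$ under $(A,B,C)$ already lies on the plane $a+b+c=0$, forcing $a^*+b^*+c^*=0$. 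Equality in the summed bound therefore forces $(x_i,y_i,z_i)\in M$ for every $i$, so $(\x,\y,\z)\in M^N$, and uniqueness of the matching partner in $M^N$ identifies $(\x,\y,\z)$ with the element of $M^\star$ projecting to $\x$.

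I do not anticipate any real obstacle. The two easy-to-miss points are (i) that the pruned set $M^\star$ is a perfect matching on its own projections (immediate from the fact that $M^N$ is), and (ii) that $a^*+b^*+c^*$ equals $0$ \emph{automatically} on the support of the pigeonhole; this is what converts the border inequality into a coordinatewise equality and upgrades the border property to a true tricolored sum-free property. One could pigeonhole only over triples with $a+b+c=0$ to improve the denominator to $(2Nt+1)^2$, but the weaker $(2Nt+1)^3$ bound already matches the lemma as stated.
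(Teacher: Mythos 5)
Your proposal is correct and follows essentially the same route as the paper: form the product matching on $H^N$ with additively extended slack functions, pigeonhole on the (at most $(2Nt+1)^3$) triples of slack values to extract $M^\star$, note that the common value satisfies $a^*+b^*+c^*=0$, and observe that constancy of the slack on the projections forces the border condition to collapse to the genuine sum-free condition. The only cosmetic difference is that you verify the sum-free property coordinatewise (using nonnegativity of each coordinate's slack summing to zero), whereas the paper phrases the same fact as the slack function vanishing identically on $S^\star\times T^\star\times U^\star$.
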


In particular, as $N \to \infty$ there are tricolored sum-free sets in $H^N$
of cardinality $|M|^{(1-o(1))N}$.

\begin{proof}
We will use the notation from Definition~\ref{def:border-tri-sum-free} for
the border tricolored sum-free set in $H$: let $M$ be the perfect matching on
sets $S,T,U \subseteq H$, with functions $\alpha \colon S \to \Z$, $\beta
\colon T \to \Z$, and $\gamma \colon U \to \Z$.

Define $M' \subseteq (S^N \times T^N \times U^N)$ in the natural way, so that
$((s_1,\dots,s_N),(t_1,\dots,t_N),(u_1,\dots,u_N)) \in M'$ if and only if
$(s_i,t_i,u_i) \in M$ for all $i$, and define $\alpha(s_1,\dots,s_N) =
\alpha(s_1) + \dots + \alpha(s_N)$ and similarly for $\beta$ and $\gamma$.
This construction yields a border tricolored sum-free set of cardinality
$|M|^N$ and range $Nt$ in $H^N$.

To obtain a genuine tricolored sum-free set, we will shrink $M'$ by a small
amount.  Because the functions $\alpha,\beta,\gamma$ for $M'$ have range $Nt$
(and thus each take on at most $2Nt+1$ values), there exist integers
$\alpha^*,\beta^*,\gamma^*$ such that for at least a $1/(2Nt+1)^3$ fraction
of the triples $(s',t',u') \in M'$, we have $\alpha(s') = \alpha^*$,
$\beta(t') = \beta^*$, and $\gamma(u') = \gamma^*$.  Furthermore,
$\alpha^*+\beta^*+\gamma^*=0$ because $\alpha(s')+\beta(t')+\gamma(u')=0$
whenever $(s',t',u') \in M'$.

Let $M''$ be the subset of $M'$ consisting of these triples with $\alpha(s')
= \alpha^*$, $\beta(t') = \beta^*$, and $\gamma(u') = \gamma^*$, and let
$S'',T'',U''$ be the sets on which $M''$ is a perfect matching.  Then $|M''| \ge
|M'|/(2Nt+1)^3 =$ $|M|^N/(2Nt+1)^3$, and $M''$ is trivially a border tricolored
sum-free set.  Furthermore,
\[
\alpha(s'') + \beta(t'') + \gamma(u'') =
\alpha^* + \beta^* + \gamma^* = 0
\]
whenever $s'' \in S''$, $t'' \in T''$, and
$u'' \in U''$, by construction.  Because $\alpha(s'') + \beta(t'') +
\gamma(u'')$ vanishes identically, the functions $\alpha$, $\beta$, and
$\gamma$ serve no purpose in the definition of a border tricolored sum-free
set. Thus, $M''$ reduces to an actual tricolored sum-free set, as desired.
\end{proof}

To control the size of the tricolored sum-free sets resulting from
Theorem~\ref{thm:main} and Lemma~\ref{lem:unborder}, we will need the
following notion. We say that an STPP construction is  \emph{uniform} if
$\abs{A_i}$ is independent of $i$, as are $\abs{B_i}$ and $\abs{C_i}$ (note
that we do not require $\abs{A_i}=\abs{B_i}=\abs{C_i}$).

\begin{lemma}
\label{lem:square}  If there is a family of STPP constructions in abelian
groups $H$ meeting the packing bound, then there is a family of
\emph{uniform} STPP constructions in powers of $H$ meeting the packing
bound.
\end{lemma}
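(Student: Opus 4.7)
The plan is to pass to a tensor power of $H$ and extract a uniform sub-STPP via dyadic pigeonholing on sizes.

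First I would form the tensor-power STPP in $H^N$: for each tuple $\mathbf{i}=(i_1,\dots,i_N)\in I^N$, take the triple $(A_{\mathbf{i}},B_{\mathbf{i}},C_{\mathbf{i}})=(A_{i_1}\times\cdots\times A_{i_N},\,B_{i_1}\times\cdots\times B_{i_N},\,C_{i_1}\times\cdots\times C_{i_N})$. Both the triple product property and the simultaneous disjointness condition pass coordinatewise through Cartesian products, so this is again an STPP, this time in $H^N$; its three packing sums are $X^N,Y^N,Z^N$ (where $X,Y,Z$ are the original packing sums), each at least $\abs{H^N}^{1-o(1)}$ by the meeting-the-packing-bound hypothesis.

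Next I would partition the tuples $\mathbf{i}\in I^N$ into $O((N\log\abs{H})^3)$ dyadic classes, grouping together those with $\abs{A_{\mathbf{i}}}$, $\abs{B_{\mathbf{i}}}$, $\abs{C_{\mathbf{i}}}$ lying in fixed dyadic intervals $[2^a,2^{a+1})$, $[2^b,2^{b+1})$, $[2^c,2^{c+1})$ respectively. By a pigeonhole/averaging argument on the three tensor-power packing sums, I would identify a single class whose contribution to each of the three sums is still $\geq \abs{H^N}^{1-o(1)}$; within any such class the three sizes are constant up to factors of $2$, and arbitrarily restricting each $A_{\mathbf{i}}$, $B_{\mathbf{i}}$, $C_{\mathbf{i}}$ to a subset of the respective exact sizes $2^a,2^b,2^c$ produces a uniform STPP in $H^N$ (since subsets of an STPP inherit both STPP conditions) with at most a constant-factor loss in each packing sum, which is absorbed into the $o(1)$.

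The hardest step will be the balancing: a naive pigeonhole on any one of the three sums yields a class that may be negligible for the other two, so simultaneously dominating all three requires a more careful argument that genuinely exploits the tensor-power structure. Concretely, one can cast the problem as a minimax over the Hölder-type family $(\alpha,\beta,\gamma)\mapsto\log\sum_i\abs{A_i}^\alpha\abs{B_i}^\beta\abs{C_i}^\gamma$ on the admissible region $\alpha+\beta+\gamma=2$, $\alpha,\beta,\gamma\in[0,1]$, at whose three vertices the values equal $\log X,\log Y,\log Z\geq(1-o(1))\log\abs{H}$; the key technical task is to combine these vertex bounds with concentration of empirical types in a sufficiently high tensor power $N$ to extract a single balanced type class whose associated sub-STPP meets the packing bound.
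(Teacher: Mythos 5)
You have correctly located the crux of the lemma --- producing a \emph{single} class on which all three packing sums stay large simultaneously --- but you have not proved it, and it is the entire content of the statement. Worse, along the route you chose (the plain tensor power $H^N$ followed by one pigeonhole), the step cannot be completed from the information you are using. Your argument sees only the size data: the three sums $X=\sum_i\abs{A_i}\abs{B_i}$, $Y=\sum_i\abs{B_i}\abs{C_i}$, $Z=\sum_i\abs{C_i}\abs{A_i}$ and the multiset of triples $(\abs{A_i},\abs{B_i},\abs{C_i})$. These do not force a balanced class to exist. Concretely, suppose the family consists of three triples whose size vectors are the cyclic permutations of $\bigl(\sqrt{\abs{H}},\sqrt{\abs{H}},1\bigr)$. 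Then $X,Y,Z\approx\abs{H}$, so the packing bound is met; but for any type class $T_\mu\subseteq[3]^N$ the three restricted packing sums are $\abs{T_\mu}\cdot\abs{H}^{N(1+\mu(j))/2}$ for $j=1,2,3$, the three exponents sum to $2N$, and $\abs{T_\mu}\le 3^N$, so the smallest of the three is at most $\abs{H^N}^{2/3+o(1)}$ for \emph{every} $\mu$ (and likewise for every dyadic class). Hence no minimax or concentration-of-types argument that only invokes the vertex values $\log X,\log Y,\log Z$ can deliver the balanced class you need; the ``key technical task'' you defer is not a technicality but a genuine obstruction to your plan. (The parts of your write-up that are fine are exactly the routine ones: Cartesian powers of STPPs are STPPs, subsets of an STPP inherit the STPP, and polynomial-in-$N$ losses are absorbed into the $o(1)$.)

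The paper's proof is organized around precisely this difficulty and attacks it with a symmetrization that your proposal lacks. It works in $H^{3N}$ with triples indexed by $(u,v,w)\in I^N\times I^N\times I^N$, setting $\widehat{A}_{u,v,w}=\prod_\ell A_{u_\ell}\times\prod_\ell B_{v_\ell}\times\prod_\ell C_{w_\ell}$ and defining $\widehat{B}_{u,v,w}$ and $\widehat{C}_{u,v,w}$ by cyclically rotating the roles of $A,B,C$. The point is that then $\lvert\widehat{A}_{u,v,w}\rvert\lvert\widehat{B}_{u,v,w}\rvert=\prod_\ell\abs{A_{u_\ell}}\abs{B_{u_\ell}}\cdot\prod_\ell\abs{B_{v_\ell}}\abs{C_{v_\ell}}\cdot\prod_\ell\abs{C_{w_\ell}}\abs{A_{w_\ell}}$, so that \emph{before} any restriction each of the three packing sums of the new family equals the same quantity $(XYZ)^N\ge\abs{H^{3N}}^{1-o(1)}$; uniformity is then enforced by fixing the empirical distributions of $u$, $v$, and $w$ separately, at a cost of $(N+1)^{3n}$. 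If you want to repair your write-up, this cyclic product construction is the idea to import; I would add that even there the choice of the three distributions must be made so that all three cyclically permuted products of restricted sums remain large (the quantity that pigeonholes cleanly is $\sum_i(\abs{A_i}\abs{B_i}\abs{C_i})^{2/3}$, which is what Lemma~\ref{lemma:packing} actually shows must be $\abs{H}^{1-o(1)}$ for constructions approaching $\omega=2$), so this step deserves more care than either your sketch or a one-line appeal to symmetry provides.
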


\begin{proof}
Let the original STPP construction consist of $n$ triples $A_i, B_i, C_i$ of
subsets of $H$ indexed by $i\in [n]$. Our new STPP construction will consist
of subsets of $H^{3N}$, where $N$ is a large number to be chosen later;
these subsets are indexed by triples $(u,v,w) \in [n]^N\times [n]^N\times
[n]^N$ and defined by
\begin{align*}
\widehat{A}_{u,v,w} & =  \prod_{\ell} A_{u_\ell} \times \prod_{\ell}
B_{v_\ell} \times \prod_{\ell} C_{w_\ell}, \\
\widehat{B}_{u,v,w} & =  \prod_{\ell} B_{u_\ell} \times \prod_{\ell}
C_{v_\ell} \times \prod_{\ell} A_{w_\ell}, \\
\widehat{C}_{u,v,w} & =  \prod_{\ell} C_{u_\ell} \times \prod_{\ell}
A_{v_\ell} \times \prod_{\ell} B_{w_\ell}.
\end{align*}
(The products here are cartesian products of sets.) It is not hard
to verify that these sets satisfy the STPP in $H^{3N}$ (see
\cite[Lemma~5.4]{CKSU}). The resulting STPP construction is not yet uniform,
but will become so below when we restrict the choices of $u$, $v$, and $w$.
We first argue that the STPP construction $\widehat{A}_{u,v,w},
\widehat{B}_{u,v,w}, \widehat{C}_{u,v,w}$ meets the packing bound if the
original sets $A_i,B_i,C_i$ did.

To check that this construction meets the packing bound, we observe that
\[
\Big(\sum_i \abs{A_i}\abs{B_i}\Big)^N \cdot \Big(\sum_i \abs{B_i}\abs{C_i}\Big)^N \cdot \Big(\sum_i \abs{C_i}\abs{A_i}\Big)^N \ge \abs{H}^{3N(1 - o(1))}
\]
because the original STPP construction meets the packing bound. Expanding
the left side gives
\[
\sum_u \prod_{\ell}
\abs{A_{u_\ell}}\abs{B_{u_\ell}} \cdot \sum_v \prod_{\ell}
\abs{B_{v_\ell}}\abs{C_{v_\ell}} \cdot \sum_w \prod_{\ell}
\abs{C_{w_\ell}}\abs{A_{w_\ell}} = \sum_{u,v,w} \prod_\ell \abs{A_{u_\ell}}\abs{B_{v_\ell}}\abs{C_{w_\ell}}\abs{B_{u_\ell}}\abs{C_{v_\ell}}\abs{A_{w_\ell}}.
\]
We have
\begin{equation} \label{eq:AhatBhat}
\big\lvert\widehat{A}_{u,v,w}\big\rvert\big\lvert\widehat{B}_{u,v,w}\big\rvert = \prod_{\ell}
\abs{A_{u_\ell}}\abs{B_{v_\ell}}\abs{C_{w_\ell}}\abs{B_{u_\ell}}\abs{C_{v_\ell}}\abs{A_{w_\ell}}
\end{equation}
and hence
\[
\sum_{u,v,w} \big\lvert\widehat{A}_{u,v,w}\big\rvert\big\lvert\widehat{B}_{u,v,w}\big\rvert \ge \abs{H}^{3N(1 - o(1))},
\]
as desired; the same also holds for
$\sum_{u,v,w} \big\lvert\widehat{B}_{u,v,w}\big\rvert\big\lvert\widehat{C}_{u,v,w}\big\rvert$ and
$\sum_{u,v,w} \big\lvert\widehat{A}_{u,v,w}\big\rvert\big\lvert\widehat{C}_{u,v,w}\big\rvert$.

To enforce uniformity, we restrict our attention to only certain choices of
$u$, $v$, and $w$ by observing that the cardinality
\[
\big\lvert\widehat{A}_{u,v,w}\big\rvert =  \prod_{\ell} \abs{A_{u_\ell}}
\abs{B_{v_\ell}} \abs{C_{w_\ell}}
\]
depends only on the \emph{distributions} of $u,v,w$ (where the
distribution of $u$ is the vector specifying the number of times each
element of $[n]$ occurs in $u$). The same is true for $\lvert\widehat{B}_{u,v,w}\rvert$ and $\lvert\widehat{C}_{u,v,w}\rvert$.
There are $\binom{N+n-1}{n-1}$ possible
distributions, but all we need is the crude upper bound $(N+1)^n$ from the
fact that each element of $[n]$ occurs between $0$ and $N$ times. It follows
that there is at least one triple $\mu_1, \mu_2, \mu_3$ of distributions for
which
\begin{equation} \label{eq:choosemu}
\sum_{u \sim \mu_1} \prod_{\ell}
\abs{A_{u_\ell}}\abs{B_{u_\ell}} \cdot \sum_{v \sim \mu_2} \prod_{\ell}
\abs{B_{v_\ell}}\abs{C_{v_\ell}} \cdot \sum_{w \sim \mu_3} \prod_{\ell}
\abs{C_{w_\ell}}\abs{A_{w_\ell}} \ge \frac{1}{(N+1)^{3n}} \abs{H}^{3N(1 - o(1))},
\end{equation}
where $u \sim \mu_1$ means $u$ has distribution $\mu_1$.

Restricting to only those sets $\widehat{A}_{u,v,w}$, $\widehat{B}_{u,v,w}$, and
$\widehat{C}_{u,v,w}$ with $u \sim \mu_1$, $v \sim \mu_2$, and $w \sim
\mu_3$ thus gives a uniform STPP construction.
Combining
\eqref{eq:AhatBhat} and \eqref{eq:choosemu} we obtain
\[
\sum_{u \sim \mu_1, v \sim \mu_2, w \sim \mu_3}
\big\lvert\widehat{A}_{u,v,w}\big\rvert\big\lvert\widehat{B}_{u,v,w}\big\rvert \ge
\frac{1}{(N+1)^{3n}} \abs{H}^{3N(1 - o(1))},
\]
which is again $\abs{H}^{3N(1 - o(1))}$ as long as $N$ is chosen sufficiently
large relative to $n$ and $\abs{H}$. The same holds for the other two
conditions in the packing bound.

Choosing $N$ and $\mu_1,\mu_2,\mu_3$ in this way thus yields a family of
uniform STPP constructions meeting the packing bound in powers of $H$, as
desired.
\end{proof}

\subsection{\texorpdfstring{Theorem~\ref{thm:boundedexponent}}{Theorem A} implies \texorpdfstring{Theorem~\ref{thm:noSTTP}}{Theorem B}}
\label{sec:conclusion}

We conclude this section by  proving that Theorem~\ref{thm:boundedexponent}
implies Theorem~\ref{thm:noSTTP}; we will then prove
Theorem~\ref{thm:boundedexponent} in Section~\ref{sec:boundedexponent}.

Fix $\ell\in \N$, and suppose that for each $\delta>0$ there is an STPP
construction in a group of exponent at most $\ell$ that proves $\omega \le
2+\delta$.  Choosing a sequence with $\delta$ tending to zero, we obtain a
family of STPP constructions that meets the packing bound by
Lemma~\ref{lemma:packing}.  Furthermore Lemma~\ref{lem:square} lets us
inflate these constructions to make them uniform, while still meeting the
packing bound, in powers of the original groups; in particular, the exponent
of all our groups is still at most $\ell$.

Consider one of these uniform STPP constructions in a group $H$ of exponent
at most $\ell$, thus generated by elements of order at most $\ell$. By
Theorem~\ref{thm:main} there exists a border tricolored sum-free set in $H$
of cardinality $\abs{M}=\sum_i \frac{\abs{A_i}\abs{B_i}\abs{C_i}}{\abs{A_i} +
\abs{B_i} + \abs{C_i}}$. Since $\abs{A_i}$, $\abs{B_i}$, and $\abs{C_i}$ are
each independent of $i$, without loss of generality we may assume that
$\abs{C_i}$ is the largest of these, in which case
\[
\abs{M}=\sum_i
\frac{\abs{A_i}\abs{B_i}\abs{C_i}}{\abs{A_i} + \abs{B_i} + \abs{C_i}} \ge
\frac{1}{3} \sum_i \abs{A_i} \abs{B_i} \ge \abs{H}^{1-o(1)}.
\]
Finally, Lemma~\ref{lem:unborder} converts the border tricolored sum-free set
to a genuine tricolored sum-free set, at the cost of raising $H$ to a high
power (which of course does not change the exponent of this group). This
contradicts Theorem~\ref{thm:boundedexponent}, which states that the
cardinality of any tricolored sum-free set in $H^N$ is at most
$3\abs{H}^{N(1-\frac{\varepsilon}{\ell})}$, and thus completes the proof of
Theorem~\ref{thm:noSTTP}.

\section{Tricolored sum-free sets in abelian groups of bounded exponent via rank and instability of tensors}
\label{sec:boundedexponent}

The remainder of the paper is devoted to the proof of
Theorems~\ref{thm:boundedexponent} and~\ref{thm:Zqn}.
Our approach builds on the
symmetric formulation presented by Tao \cite{T}, although an earlier version
of this paper contained a more direct application of the original methods
\cite{CLP,EG} to the $\F_p^n$ case.

We begin here by outlining the main ideas of the proof, giving a road map
for the remaining sections. In order to formulate the symmetric version, we
develop the notion of \emph{slice rank} (in Section~\ref{sec:rank}), which
is a weakening of tensor rank. For tricolored sum-free sets in $\F_p^n$, the
upper bound is then a consequence of the following three facts about slice
rank:
\begin{enumerate}
\item Just as the ordinary matrix rank of an $m \times m$ diagonal matrix
    is equal to $m$ if all the diagonal entries are nonzero, the same holds
    for the slice rank of an $m \times m \times m$ diagonal
    tensor (over any field; see Lemma~\ref{lem:diag} or
    \cite[Lemma 1]{T}).

\item A tricolored sum-free set of cardinality $m$ in a group $G$ implies
    that the multiplication tensor of the group algebra
    $\F G$ restricts to an $m \times m \times m$ diagonal tensor with nonzero diagonal entries.  It follows that
    the slice rank of the multiplication tensor of $\F G$ is at least the
    size of any tricolored sum-free set in  $G$ (see
    Proposition~\ref{pr:sumfree-slicerank}).

\item Over a field of characteristic $p$, the slice rank of the
    $\F_p^n$-multiplication tensor is at most the number of vectors in
    $[p]^n$ with coordinate sum at most $pn/3$, which is at most
    $(p-\varepsilon_p)^n$ for some $\varepsilon_p > 0$.  By a remarkable
    observation \cite{CLP,EG,T}, this upper bound on the slice rank follows almost
    immediately from the fact that the $(x,y,z)$ entry of the tensor in
    question equals $\delta_0(x + y + z)$, and the delta function
    $\delta_0(x+y+z)$ (indeed \emph{every} function of $x+y+z$) is expressible as an $n$-variate polynomial of degree
$(p-1)n$; see Observation~\ref{obs:triangle}.
\end{enumerate}
These three statements give an upper bound of $(p-\varepsilon_p)^n$ on the
cardinality of a tricolored sum-free set in $\F_p^n$.
Expressed a different way, the upper bound for $H = \F_p^n$ is $\abs{H}^{1 -
\alpha_p}$ for some constant $\alpha_p > 0$. In
Section~\ref{sec:boundedexp-proof},  we prove an upper bound of the same
form for tricolored sum-free sets in any group of the form $G = (\Z/p^k \Z)^n$ with $p$ prime, using a similar
argument but using binomial coefficients instead of
polynomials to describe the delta function.

We then extend this bound to \emph{any} abelian group $H$ of bounded
exponent by arguing that such a group must decompose as $H\iso G\times K$ where $G \iso (\Z/p^k \Z)^n$ and $\abs{G} \ge
\abs{H}^c$ for a constant $c>0$. Such a decomposition expresses the $H$-multiplication
tensor $D_H$ as a tensor product $D_G\otimes D_K$. We establish a simple but powerful property of slice rank: the
slice rank of $T \otimes T'$ is at most the slice rank of $T$ times the side
length of $T'$ (Proposition~\ref{pr:slicerankmult}). This allows us to
reduce to the $(\Z/p^k \Z)^n$ case, because the slice rank of $D_H$ is at most $\abs{G}^{1 - \alpha}\abs{K} = \abs{H}^{1 - \alpha'}$ for some
constant $\alpha' > 0$. As before, this bound on slice rank also bounds the cardinality of tricolored sum-free sets in $G$.

In the presentation below, we draw connections to \emph{(in)stability} of
tensors, a notion coming from geometric invariant theory (GIT). This broader
context seems powerful, and potentially useful beyond the results in this
paper. However, the reader who is interested only in the proofs of
Theorems~\ref{thm:boundedexponent} and~\ref{thm:noSTTP} can skip all of
Section~\ref{sec:instability} except Lemma~\ref{lem:diag} and
Proposition~\ref{pr:sumfree-slicerank} and skip Theorem~\ref{thm:unstable2} since we only need
the shaper bound on slice rank stated in Proposition~\ref{pr:triangleslice}, coming from triangle rank.

\subsection{Tensor rank and its variants}
\label{sec:rank}

Throughout this section, $X$, $Y$, and $Z$ will denote finite sets. A
function $F\colon X\times Y\to \F$ with values in a field $\F$ has an
unambiguous \emph{rank}; $\Rank(F)$ is the smallest $k$ for which we can
write $F(x,y)=\sum_{i=1}^k f_i(x)g_i(y)$, and this coincides with the rank
of the $\abs{X}\times \abs{Y}$ matrix described by $F$.

A common way to define the rank of a function $F\colon X\times Y\times Z\to
\F$ is \emph{tensor rank}: $\tensorrank(F)$ is the smallest $k$ for which we
can write
\[F(x,y,z)=\sum_{i=1}^k f_i(x)g_i(y)h_i(z).\]
In this paper, we make use of another notion of rank which we call \emph{slice rank}:
$\slicerank(F)$ is the smallest $k$ for which we can write
\begin{equation}
\label{eq:slicerank}
F(x,y,z)=\sum_{i=1}^a f_i(x,y)g_i(z)+\sum_{i=a+1}^b f_i(x,z)g_i(y)+\sum_{i=b+1}^k f_i(y,z)g_i(x).
\end{equation}
As far as we know, this notion of rank was first used by Tao in \cite{T};
here we take this study a bit further by establishing some  basic properties of slice rank (Section~\ref{sec:rank})
and connections to GIT (Section~\ref{sec:instability}).

Since any sum $\sum_i f_i(x)g_i(y)h_i(z)$ automatically fits the form
\eqref{eq:slicerank}, we always have
\[
\slicerank(F)\leq \tensorrank(F)\leq\abs{\support (F)}.
\]
Similarly, directly from the definition \eqref{eq:slicerank} we immediately conclude that
\begin{equation} \label{eq:slicerankmin}
\slicerank(F)\leq \min(\abs{X},\abs{Y},\abs{Z})
\end{equation}
and
\begin{equation} \label{eq:tensorrankmax}
\tensorrank(F)\leq \slicerank(F)\cdot \max(\abs{X},\abs{Y},\abs{Z}).
\end{equation}

\begin{remark} \label{rmk:slicevtens}
Slice rank and tensor rank can be quite different. For example, if $\abs{X}=N$ the function
$F\colon X \times X \times X \to \F$ given by $F(x,y,z) = \delta_{x,y}$ has
$\slicerank(F) = 1$ while $\tensorrank(F) = N$. (Note that this is the
largest possible separation by \eqref{eq:tensorrankmax}.)

As another example of the difference between slice and tensor rank,
every function $F\colon X \times X \times X \to \F$ has slice rank at most
$N$ (and this bound is sharp; see Lemma~\ref{lem:diag} or \cite[Lemma
1]{T}). However, tensor rank can be much larger: the
tensor rank of a generic\footnote{That is, the
set of functions $F$ with this tensor rank is non-empty and Zariski-open.}
$F$ is $\lceil \frac{N^3}{3N-2} \rceil\approx N^2/3$ if $N \ne 3$ and $\F$ is algebraically closed \cite{Lickteig}.
\end{remark}

Given functions $F\colon X'\times Y'\times Z'\to \F$ and $G\colon X''\times
Y''\times Z''\to \F$, set $X=X'\times X''$, $Y=Y'\times Y''$, and
$Z=Z'\times Z''$, and let $F\otimes G$ denote the function $F\otimes G\colon
X\times Y\times Z\to \F$ given by
\[
\big((x',x''),(y',y''),(z',z'')\big)\mapsto F(x',y',z')G(x'',y'',z'').
\]
The inequality $\tensorrank(F\otimes G)\leq
\tensorrank(F)\cdot\tensorrank(G)$ is well-known; it has the following two
variants, which bound the slice rank of a tensor product:

\begin{proposition}
\label{pr:slicerankmult} For any $F,G$ as above, $\slicerank(F\otimes G)\leq
\slicerank(F)\cdot \tensorrank(G)$ and \newline $\slicerank(F\otimes G)\leq
\slicerank(F)\cdot \max(\abs{X''},\abs{Y''},\abs{Z''})$.
\end{proposition}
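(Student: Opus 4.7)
The plan is to take a minimal slice-rank decomposition of $F$ and combine each of its $\slicerank(F)$ terms with a suitable decomposition of $G$, chosen so that every resulting product is again a single slice of $F\otimes G$. Fix such a decomposition of $F$ as in \eqref{eq:slicerank}, so $F$ is a sum of $z'$-slices $f_i(x',y')g_i(z')$, $y'$-slices $f_i(x',z')g_i(y')$, and $x'$-slices $f_i(y',z')g_i(x')$. The key observation is that when we multiply a slice of $F$ by a term coming from a decomposition of $G$, the resulting product on $X\times Y\times Z$ is a single slice of $F\otimes G$ provided the factors of the $G$-term can be absorbed into the appropriate sides; the art is in picking the decomposition of $G$ to match the slice type of $F$.

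For the first inequality, I would write $G=\sum_{j=1}^{\tensorrank(G)} p_j(x'')q_j(y'')r_j(z'')$. Multiplying $f_i(x',y')g_i(z')$ by $p_j(x'')q_j(y'')r_j(z'')$ produces $\bigl[f_i(x',y')p_j(x'')q_j(y'')\bigr]\cdot\bigl[g_i(z')r_j(z'')\bigr]$, a $z$-slice of $F\otimes G$; the analogous regrouping works for $y'$- and $x'$-slices of $F$. Summing over $i$ and $j$ gives a slice decomposition of $F\otimes G$ with $\slicerank(F)\cdot\tensorrank(G)$ terms.

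For the second inequality, instead of using a tensor-rank decomposition of $G$ I would use a ``trivial'' delta-function decomposition along one axis, choosing the axis to match the slice type. For a $z'$-slice $f_i(x',y')g_i(z')$ of $F$, write $G(x'',y'',z'')=\sum_{z_0\in Z''}\delta_{z_0}(z'')\,G(x'',y'',z_0)$, so the product becomes $\sum_{z_0}\bigl[f_i(x',y')G(x'',y'',z_0)\bigr]\cdot\bigl[g_i(z')\delta_{z_0}(z'')\bigr]$, giving $\abs{Z''}$ new $z$-slices per original $z'$-slice of $F$. For $y'$-slices of $F$, decompose $G$ along $y''$ instead, and for $x'$-slices of $F$, decompose $G$ along $x''$. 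The total number of slices is at most $\slicerank(F)\cdot\max(\abs{X''},\abs{Y''},\abs{Z''})$.

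I do not expect a serious obstacle here: the whole argument is a bookkeeping exercise once the matching principle (decompose $G$ along the same axis as the slice of $F$) is spotted, and both bounds follow from the same template. The only subtlety worth flagging is that a naive choice of decomposition of $G$ (e.g.\ always along the same axis, or splitting $G$ into terms with extra dependence on the ``wrong'' variables) would fail to yield genuine slices and would mix coordinates, so one must be careful to match the slice direction of each term of $F$ with the direction of the decomposition of $G$.
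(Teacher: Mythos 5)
Your proposal is correct and matches the paper's proof essentially exactly: both inequalities use the same matching of each slice of $F$ with a decomposition of $G$ — a tensor-rank decomposition for the first bound, and a delta-function expansion of $G$ along the axis corresponding to the slice type for the second. No gaps.
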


\begin{proof}
Set $k=\slicerank(F)$ and choose functions $f'_i$ and $g'_i$  for $1\leq
i\leq k$ as in \eqref{eq:slicerank}. Similarly, set $\ell=\tensorrank(G)$
and write
$G(x'',y'',z'')=\sum_{j=1}^{\ell}\alpha_j(x'')\beta_j(y'')\gamma_j(z'')$. If
we then define
\begin{align*}
f_{ij}(x,y)&\coloneq f'_i(x',y')\alpha_j(x'')\beta_j(y''),\qquad g_{ij}(z)\coloneq g_i(z')\gamma_j(z'')&\text{for }1\leq i\leq a,&\ 1\leq j\leq \ell,\\
f_{ij}(x,z)&\coloneq f'_i(x',z')\alpha_j(x'')\gamma_j(z''),\qquad g_{ij}(y)\coloneq g_i(y')\beta_j(y'')&\text{for }a< i\leq b,&\ 1\leq j\leq \ell, \text{ and}\\
f_{ij}(y,z)&\coloneq f'_i(y',z')\beta_j(y'')\gamma_j(z''),\qquad g_{ij}(x)\coloneq g_i(x')\alpha_j(x'')&\text{for }b< i\leq k,&\ 1\leq j\leq \ell,
\end{align*}
then
\[
(F\otimes G)(x,y,z)=\sum_{\substack{1\leq i\leq a\\1\leq j\leq \ell}}
f_{ij}(x,y)g_{ij}(z) +\sum_{\substack{a< i\leq b\\1\leq j\leq \ell}}
f_{ij}(x,z)g_{ij}(y)+\sum_{\substack{b< i\leq k\\1\leq j\leq \ell}}
f_{ij}(y,z)g_{ij}(x).
\]
This demonstrates that $\slicerank(F\otimes G)\leq k\cdot \ell$, which
verifies the first claim. For the second claim, we instead define
\begin{align*}
f_{i\zeta}(x,y)&\coloneq f'_i(x',y')G(x'',y'',\zeta),\qquad \,\,g_{i\zeta}(z)\coloneq g_i(z')\delta_{\zeta}(z'')&\text{for }1\leq i\leq a,&\ \zeta\in Z'',\\
f_{i\psi}(x,z)&\coloneq f'_i(x',z')G(x'',\psi,z''),\qquad g_{i\psi}(y)\coloneq g_i(y')\delta_\psi(y'')&\text{for }a< i\leq b,&\ \psi\in Y'', \text{ and}\\
f_{i\xi}(y,z)&\coloneq f'_i(y',z')G(\xi,y'',z''),\qquad \  g_{i\xi}(x)\coloneq g_i(x')\delta_\xi(x'')&\text{for }b< i\leq k,&\ \xi\in X'',
\end{align*}
so that
\[
(F\otimes G)(x,y,z)=\sum_{\substack{1\leq i\leq a\\\zeta\in Z''}} f_{i\zeta}(x,y)g_{i\zeta}(z)+\sum_{\substack{a< i\leq b\\\psi\in Y''}} f_{i\psi}(x,z)g_{i\psi}(y)+\sum_{\substack{b< i\leq k\\\xi\in X''}} f_{i\xi}(y,z)g_{i\xi}(x).
\]
This shows that $\slicerank(F\otimes G)\leq k\cdot
\max(\abs{X''},\abs{Y''},\abs{Z''})$, which verifies the second claim.
\end{proof}

\subsection{Unstable tensors and slice rank}
\label{sec:instability} In this section we relate slice rank to the notion
of an unstable tensor from geometric invariant theory. We show that
functions with low slice rank are unstable and prove that a quantitative
bound on instability yields a bound on slice rank of tensor powers.

\begin{definition}
\label{def:unstable} A function $F\colon X\times Y\times Z\to \F$ is
\emph{unstable} if there exist
\begin{enumerate}
\item a basis $\{f_a\}$ for the functions $X\to \F$, and similarly bases $\{g_b\colon Y\to \F\}$ and $\{h_c\colon Z\to \F\}$,
\item weights $u_a,v_b,w_c\in \R$ with arithmetic means
    $u_{\avg},v_{\avg},w_{\avg}$, and
\item coefficients $r_{a,b,c}\in \F$ such that
\end{enumerate}
\begin{equation*}
F(x,y,z)=\sum_{\substack{u_a+v_b+w_c<\\u_{\avg}+v_{\avg}+w_{\avg}}}r_{a,b,c}f_a(x)g_b(y)h_c(z).
\end{equation*}
\end{definition}

This terminology comes from geometric invariant theory (when $\F$ is
algebraically closed). Consider the action of the group $G=\SL_{\abs{X}}
\times \SL_{\abs{Y}}\times \SL_{\abs{Z}}$ on the vector space of functions
$F\colon X\times Y\times Z\to \F$. A function $F$ is said to be unstable in
the sense of GIT if the zero function is contained in the Zariski closure of
the $G$-orbit of $F$, or equivalently if every $G$-invariant homogeneous
polynomial of positive degree vanishes on $F$. The Hilbert--Mumford
criterion \cite[\S1]{Mumford} is a concrete condition for a function to be
unstable: it says that a function $F$ is unstable in the sense of GIT if and
only if there exist functions and weights making $F$ unstable according to
Definition~\ref{def:unstable}.

However, geometric invariant theory only deals with algebraically closed
fields $\F$, which is why for general $\F$ we take
Definition~\ref{def:unstable} as the \emph{definition} of unstable. To make
our bounds explicit we will also need to introduce a \emph{quantitative}
version of this condition.

\begin{definition}
\label{def:instability} The \emph{instability} of a function $F\colon
X\times Y\times Z\to \F$ is the supremum of all $\varepsilon \geq 0$ such
that there exist bases $f_a,g_b,h_c$, nontrivial weights  $u_a,v_b,w_c\in
\R$, and coefficients $r_{a,b,c} \in \F$ such that
\begin{equation}
\label{eq:unstable}
F(x,y,z)=\sum_{u_a+v_b+w_c\leq R}r_{a,b,c}f_a(x)g_b(y)h_c(z)
\end{equation} where
\begin{equation}
\label{eq:unstableR}
R=(u_{\avg}+v_{\avg}+w_{\avg})-\varepsilon(u_{\max}-u_{\min}+v_{\max}-v_{\min}+w_{\max}-w_{\min}).
\end{equation}
\end{definition}

By ``nontrivial weights'' we mean that the weights $u_a$, $v_b$, and $w_c$
should not all be constant (since in that case our definition of $R$ becomes
degenerate). Note that a function $F$ is unstable if and only if
$\instability(F) > 0$. By convention, if for some $F$ there is no nonnegative
$\varepsilon$ satisfying the hypotheses of the definition, we define
$\instability(F) = -\infty$. For readers familiar with the terminology of
GIT, we remark that (assuming $\F$ is algebraically closed) the function $F$
is \emph{semi-stable} if and only if $\instability(F)\in \{-\infty,0\}$, and
$F$ is \emph{stable} if and only if $\instability(F)=-\infty$. For
consistency with this terminology, we are careful to use the term ``not
unstable'' where appropriate (which is not the same, in GIT, as ``stable'').

\begin{remark}[Notes on the definition of instability]\label{rem:instability}\phantom{x}\ \phantom{x}
\begin{enumerate}[wide]
\item Translating all the weights $u_a$ (or all the $v_b$, or all the
    $w_c$) by a constant translates the quantity $R$ of
    \eqref{eq:unstableR} by the same amount, so the condition
    $u_a+v_b+w_c\leq R$ is invariant under translation. Therefore  if we
    like we may assume that $u_{\avg}=v_{\avg}=w_{\avg}=0$, or that
    $u_{\min}=v_{\min}=w_{\min}=0$, without loss of generality.
 Similarly, scaling all of the weights $u_a$, $v_b$, $w_c$ by the same constant does
    not change the definition of instability nor the value of
    $\instability(F)$.
\item It does not matter whether we require the weights to lie in $\R$,
    $\Q$, or $\Z$. Indeed, the supremum defining instability could be taken
    over rational $\varepsilon \geq 0$ without affecting the definition. For any given rational $\varepsilon$,
    the inequalities relating the $u_a$, $v_b$, and $w_c$ are a system of
    homogeneous linear inequalities with rational coefficients, so they
    have rational solutions if and only if they have real solutions. We can
    then transform rational weights to integer weights by scaling. (In particular,
    this justifies our earlier claim that the Hilbert--Mumford criterion is
    equivalent to Definition~\ref{def:unstable} when over an algebraically
    closed field; the usual statement of the Hilbert--Mumford criterion
    would require integer weights with $u_{\avg}=v_{\avg}=w_{\avg}=0$.)
\item The definition of instability would be the same if the $f_a$, $g_b$,
    and $h_c$ were arbitrary functions not required to form bases, as long
    as $\abs{A} \leq \abs{X}$, $\abs{B} \leq \abs{Y}$, and $\abs{C}\leq \abs{Z}$, where $A$, $B$,
    $C$ denote index sets for the $f_a$,  $g_b$,  $h_c$, respectively.
    Indeed, if one function $f_a$ is a linear
    combination of previous functions $f_{a'}$ with $u_{a'} \leq u_{a}$,
    then the terms $f_a\otimes g_b\otimes h_c$ appearing in
    \eqref{eq:unstable} can be replaced with a linear combination of terms $f_{a'}\otimes
    g_b\otimes h_c$, still satisfying $u_{a'}+v_b+w_c\leq u_a+v_b+w_c \leq
    R$. Repeating this, we may assume that the set of functions
    $\{f_a\}_{a \in A'}$ (with $A' \subseteq A$) appearing is linearly
    independent. Then extend this set to a basis arbitrarily, giving
    $\abs{X}-\abs{A}$ of these new functions the weight  $u_{\avg}$ and giving
    $\abs{A}-\abs{A'}$ of them the weights  $\{u_a\}_{a \in A \setminus A'}$. The
    other tensor factors are handled similarly.
\end{enumerate}
\end{remark}

Recall from \eqref{eq:slicerankmin} that any  function $F\colon X\times Y\times Z\to \F$ has
$\slicerank(F)\leq \min(\abs{X},\abs{Y},\abs{Z})$. It turns out that there
is a close relationship between $F$ being unstable and this inequality being
\emph{strict}. More precisely, the latter implies the former
(Theorem~\ref{thm:unstable1}), and the former implies that the latter holds
for sufficiently large tensor powers when $\abs{X}=\abs{Y}=\abs{Z}$
(Theorem~\ref{thm:unstable2}).

\begin{theorem}
\label{thm:unstable1}
If $\slicerank(F)<\min(\abs{X},\abs{Y},\abs{Z})$, then $F$ is unstable.
\end{theorem}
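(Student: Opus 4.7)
The plan is to unpack the hypothesis $\slicerank(F) < \min(\abs{X},\abs{Y},\abs{Z})$ by fixing an optimal slice-rank decomposition
\[
F(x,y,z) = \sum_{i=1}^{a} f_i(x,y)g_i(z) + \sum_{i=a+1}^{b} f_i(x,z)g_i(y) + \sum_{i=b+1}^{k} f_i(y,z)g_i(x)
\]
with $k < \min(\abs{X},\abs{Y},\abs{Z})$, and to reverse-engineer bases and weights so that every term of $F$ ends up with weight strictly below the average. The key point is that in the first group of terms, the $Z$-factor lies in a low-dimensional subspace $W_Z \subseteq \F^Z$ of dimension at most $a$; similarly, the $Y$-factors in the second group lie in $W_Y$ of dimension at most $b-a$, and the $X$-factors in the third group lie in $W_X$ of dimension at most $k-b$.

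Concretely, I would choose a basis $\{h_c\}$ of $\F^Z$ whose first $\dim W_Z$ elements span $W_Z$, and similarly choose bases $\{g_b\}$ of $\F^Y$ and $\{f_a\}$ of $\F^X$ adapted to $W_Y$ and $W_X$. Assign the weight $-1$ to each basis element lying in the distinguished subspace, and the weight $0$ to every other basis element. Then expand $F$ in the tensor-product basis $\{f_a(x)g_b(y)h_c(z)\}$ and read off which triples $(a,b,c)$ can support nonzero coefficients: a term coming from $f_i(x,y)g_i(z)$ with $i \le a$ forces $w_c = -1$ because $g_i(z) \in W_Z$, and the analogous statement holds for the other two slice types. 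Hence for every triple with $r_{a,b,c} \ne 0$ we have $u_a+v_b+w_c \le -1$.

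To finish, I would compare this with the average. By construction,
\[
u_{\avg}+v_{\avg}+w_{\avg} = -\Bigl(\tfrac{\dim W_X}{\abs{X}} + \tfrac{\dim W_Y}{\abs{Y}} + \tfrac{\dim W_Z}{\abs{Z}}\Bigr) \ge -\tfrac{(k-b)+(b-a)+a}{\min(\abs{X},\abs{Y},\abs{Z})} = -\tfrac{k}{\min(\abs{X},\abs{Y},\abs{Z})},
\]
which is strictly greater than $-1$ precisely because $k < \min(\abs{X},\abs{Y},\abs{Z})$. Combining this with the previous paragraph gives $u_a+v_b+w_c \le -1 < u_{\avg}+v_{\avg}+w_{\avg}$ for every nonzero term, so $F$ fits Definition~\ref{def:unstable}. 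The only mild subtlety I expect is bookkeeping: one must verify that the nonzero coefficients really are confined to the ``low-weight'' region in each of the three slice types separately, and that the strict inequality survives the conversion from the crude bound $\tfrac{k}{\min(\abs{X},\abs{Y},\abs{Z})} < 1$ to the required strict comparison $u_a+v_b+w_c < u_{\avg}+v_{\avg}+w_{\avg}$; both points follow from the strict inequality $k < \min(\abs{X},\abs{Y},\abs{Z})$ and the choice of $-1$ versus $0$ as the only two weights used.
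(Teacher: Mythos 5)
Your proposal is correct and is essentially the paper's own argument: fix an optimal slice-rank decomposition, give weight $-1$ to a basis of the span of the single-variable factors in each slice type and weight $0$ to the remaining basis vectors, observe that every nonzero coefficient then has weight sum at most $-1$, and use $k<\min(\abs{X},\abs{Y},\abs{Z})$ to show the average weight sum exceeds $-1$. The only difference is cosmetic (you phrase the construction via the subspaces $W_X,W_Y,W_Z$, while the paper extends the linearly independent single-variable functions to bases directly), so there is nothing to fix.
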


\begin{proof}
Given an arbitrary $F$, choose a decomposition
\begin{equation}
\label{eq:semistable-implies-full-wrank}
F(x,y,z)=\sum_{i=1}^p \alpha_i(y,z)f_i(x)+\sum_{j=1}^q\beta_j(x,z)g_j(y)+\sum_{k=1}^{r} \gamma_k(x,y)h_k(z)
\end{equation}
with $p+q+r=\slicerank(F)$. Note that this implies that the $p$ functions
$f_1,\dots,f_p$ are linearly independent (otherwise we could find a smaller
decomposition of $F$), so extend them to a basis $f_1,\dots,f_{\abs{X}}$
for the functions $X\to \F$. Do the same for the other factors, and define
weights by
\[u_i=\begin{cases}-1&\text{for }1\leq i\leq p,\\0&\text{for }p<i\leq \abs{X},\end{cases}\quad
v_j=\begin{cases}-1&\text{for }1\leq j\leq q,\\0&\text{for }q< j\leq \abs{Y},\end{cases}\quad
w_k=\begin{cases}-1&\text{for }1\leq k\leq r,\\0&\text{for }r< k\leq \abs{Z}.\end{cases}
\] Expanding
$\alpha_i(y,z)$ as a linear combination of $g_j(y)h_k(z)$ and so on, the
decomposition \eqref{eq:semistable-implies-full-wrank} says that $F$ is a
linear combination of functions $f_i\cdot g_j\cdot h_k$ where at least \emph{one}
of $1\leq i\leq p$, $1\leq j\leq q$, or $1\leq k\leq r$ holds.
From the definition of our weight functions, these are precisely the cases when $u_i+v_j+w_k \leq -1$.

On the other hand $u_{\avg}=-\frac{p}{\abs{X}}$,
$v_{\avg}=-\frac{q}{\abs{Y}}$, and $w_{\avg}=-\frac{r}{\abs{Z}}$, from which it follows that
\[
u_{\avg}+v_{\avg}+w_{\avg}= - \frac{p}{\abs{X}} - \frac{q}{\abs{Y}}- \frac{r}{\abs{Z}} \geq - \frac{ p + q + r}{\min(\abs{X},\abs{Y},\abs{Z})} =-\frac{\slicerank(F)}{\min(\abs{X},\abs{Y},\abs{Z})}. \]
Our hypothesis that $\slicerank(F)<\min(\abs{X},\abs{Y},\abs{Z})$ thus guarantees that these weights satisfy $u_{\avg}+v_{\avg}+w_{\avg}> -1$. In this case, $F$ is a linear combination of functions $f_i\cdot g_j\cdot h_k$ where
$u_i+v_j+w_k \leq -1< u_{\avg}+v_{\avg}+w_{\avg}$, so $F$ is unstable.
\end{proof}

Theorem~\ref{thm:unstable1} implies that any tensor $F$ that is not unstable
has
\[
\slicerank(F)=\min(\abs{X},\abs{Y},\abs{Z}).
\]
This lets us compute the slice rank of any diagonal. Say that a function
$F\colon X\times Y\times Z\to \F$ is a \emph{diagonal} if its support
$\{(x,y,z) : F(x,y,z)\neq 0\}$ is a perfect matching on $X$, $Y$, and $Z$.
Note that this implies $\abs{X}=\abs{Y}=\abs{Z}=\abs{\support(F)}$, and that
we require all the diagonal entries in $F$ (i.e., those corresponding to the perfect
matching) to be nonzero.

The following lemma was introduced to great effect by Tao \cite[Lemma~1]{T}.
We first reproduce Tao's elementary proof for completeness, and then include
a second proof inspired by the GIT perspective.

\begin{lemma}
\label{lem:diag} If $F\colon X\times Y\times Z\to \F$ is a diagonal, then
\[
\slicerank(F)=\tensorrank(F)=\abs{X}.
\]
\end{lemma}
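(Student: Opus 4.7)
My plan is to establish $\slicerank(F) \geq |X|$; combined with the trivial chain $\slicerank(F) \leq \tensorrank(F) \leq \abs{\support(F)} = |X|$, this forces both displayed equalities at once. The upper bound $\tensorrank(F) \leq n$ (where $n = |X| = |Y| = |Z|$) is immediate from the definition of ``diagonal'': after relabeling via the perfect matching, we may assume $X = Y = Z = [n]$ with $F(i,j,k) = c_i\,\delta_{i=j=k}$ and all $c_i \neq 0$, whence $F = \sum_{i=1}^n c_i\, \delta_i \otimes \delta_i \otimes \delta_i$ exhibits tensor rank at most $n$.

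For the lower bound I would argue by contradiction, following the strategy of \cite{T}. Suppose $\slicerank(F) < n$, witnessed by a slice decomposition
\[
F(x,y,z) = \sum_{i=1}^a f_i(x)\alpha_i(y,z) + \sum_{j=1}^b g_j(y)\beta_j(x,z) + \sum_{k=1}^c h_k(z)\gamma_k(x,y)
\]
with $a + b + c < n$. The span $V = \Span(f_1,\ldots,f_a) \subseteq \F^X$ has dimension at most $a$, so its annihilator $V^{\perp} = \{v \in \F^X : \sum_x v(x) f_i(x) = 0 \text{ for all } i\}$ has dimension at least $n - a$. For any $v \in V^{\perp}$, contract $F$ in the first factor to form the matrix $F_v(y,z) = \sum_x v(x) F(x,y,z)$. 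Because $F$ is diagonal with nonzero diagonal entries, $F_v(y,z) = v(y) c_y\,\delta_{y=z}$, a diagonal matrix of rank exactly $\abs{\support(v)}$. On the other hand, substituting the slice decomposition and using $\sum_x v(x)f_i(x) = 0$ collapses the first sum, leaving $F_v(y,z) = \sum_j g_j(y)\widetilde\beta_j(z) + \sum_k h_k(z)\widetilde\gamma_k(y)$, a matrix of rank at most $b + c$.

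Hence every $v \in V^{\perp}$ satisfies $\abs{\support(v)} \leq b + c$. The remaining step, which I consider the main obstacle and the cleverest part of the proof, is the linear-algebra lemma: any subspace $W \subseteq \F^n$ whose vectors all have support of size at most $k$ has $\dim W \leq k$. I would verify this by putting a basis of $W$ in reduced row echelon form with pivot columns $p_1 < \cdots < p_{\dim W}$: the sum of the basis vectors has value $1$ at each pivot column, so its support has size at least $\dim W$. Applying this to $W = V^{\perp}$ yields $n - a \leq \dim V^{\perp} \leq b + c$, i.e., $a + b + c \geq n$, contradicting $\slicerank(F) < n$. For the promised second proof ``inspired by the GIT perspective,'' one could instead invoke Theorem~\ref{thm:unstable1} contrapositively and verify that a diagonal with nonzero entries is not unstable---for instance, by exhibiting an explicit $\SL_X \times \SL_Y \times \SL_Z$-invariant polynomial on tensors that evaluates to a nonzero multiple of $\prod_x c_x$ on such $F$, and then appealing to the Hilbert--Mumford criterion.
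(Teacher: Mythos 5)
Your proposal is correct and follows essentially the same route as the paper's first (elementary) proof, namely Tao's argument: contract $F$ against a vector in the annihilator of the span of the $f_i$ and compare the rank $\abs{\support(v)}$ of the resulting diagonal matrix with the rank bound $b+c$ coming from the remaining slices. The only cosmetic difference is that the paper picks a single vector of maximal support in the annihilator, whereas you bound the dimension of the annihilator using the fact that all of its vectors have small support; these are equivalent formulations of the same linear-algebra step.
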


\begin{proof}[Tao's proof from \cite{T}]
Suppose that $\slicerank(F)<|X|$. Then there exist $k<|X|$ and functions
$f_i, g_i$ such that
\begin{equation}
\label{equ:tao-proof-equation}
F(x,y,z)=\sum_{i=1}^{j}f_{i}(x)g_{i}(y,z)+\sum_{i=j+1}^{\ell}f_{i}(y)g_{i}(z,x)+\sum_{i=\ell+1}^{k}f_{i}(z)g_{i}(x,y).
\end{equation}
Without loss of generality suppose that $j>0$, and let
\[
V=\left\{ h\colon X\rightarrow\mathbb{F}:\ \sum_{x\in X}f_{i}(x)h(x)=0\text{ for all }1\leq i\leq j\right\}.
\]
This vector space has dimension at least $|X|-j$. Let $u\in V$ have maximal
support, and set $\Sigma=\left\{ x\in X:\ u(x)\neq0\right\}$. Then
$|\Sigma|\geq\dim V\geq|X|-j,$ since otherwise there exists nonzero $r\in V$
vanishing on $\Sigma,$ and the function $u+r$ would have a larger support
than $u.$ Multiply both sides of (\ref{equ:tao-proof-equation}) by $u(x)$ and
sum over $x$ to reduce from a tensor to a matrix. As $u\in V$ is orthogonal
to $f_i$ for $i=1,\dots,j$, the right side of (\ref{equ:tao-proof-equation})
becomes
\[
\sum_{i=j+1}^{\ell}f_{i}(y)\left(\sum_{x\in X}u(x)g_{i}(z,x)\right)+\sum_{i=\ell+1}^{k}f_{i}(z)\left(\sum_{x\in X}u(x)g_{i}(x,y)\right),
\]
which is a function of rank at most $k-j< |X|-j.$ Since the support of
$F(x,y,z)$ is precisely given by a perfect matching, $\sum_{x\in
X}u(x)F(x,y,z)$ will be a function with rank equal to
$|\support(u)|\geq|X|-j.$ Thus we have a contradiction, and it follows that
$\slicerank(F)=|X|.$
\end{proof}

We remark that Tao's proof is inductive, and extends to higher-order tensors
as well \cite{T}. The next proof of
Lemma~\ref{lem:diag} uses the geometric invariant theory perspective on
slice rank.

\begin{proof}[Proof of Lemma~\ref{lem:diag}]
By Theorem~\ref{thm:unstable1}, it suffices to prove that a diagonal is not
unstable.  It will be convenient to use dual bases to compute coefficients.
Given bases $f_a,g_b,h_c$ as in Definition~\ref{def:unstable},
consider the dual bases $f_a', g_b',h_c'$; in other words,
\[
\sum_{x \in X} f_a(x) f'_{a'}(x) = \begin{cases} 1 & \text{if $a=a'$, and}\\
0 & \text{otherwise,}
\end{cases}
\]
and the same holds for $g_b,g_b'$ (summing over $Y$) and $h_c,h_c'$ (summing over $Z$).
Our proof will depend on the following two observations.

First, the condition that
\[
F(x,y,z)=\sum_{\substack{u_a+v_b+w_c<\\u_{\avg}+v_{\avg}+w_{\avg}}}r_{a,b,c}f_a(x)g_b(y)h_c(z)
\]
is equivalent to the condition that
\begin{equation}
\label{eq:dualcond}
\sum_{x,y,z} F(x,y,z) f_{a}'(x)
g_{b}'(y) h_{c}'(z) \neq 0\quad\implies\quad u_a+v_b+w_c<u_{\avg}+v_{\avg}+w_{\avg},
\end{equation}
since that sum is precisely the coefficient $r_{a,b,c}$.

Second, the condition \eqref{eq:dualcond} is invariant under the change of basis that replaces $f_{a_2}'$ by $\alpha f_{a_1}'+f_{a_2}'$ as long as $u_{a_1} \geq u_{a_2}$. For if
\[\sum_{x,y,z} F(x,y,z)\big(\alpha f_{a_1}'(x)+ f_{a_2}'(x)\big)  g_{b}'(y) h_{c}'(z) \neq 0,\]
then either $\sum_{x,y,z} F(x,y,z)  f_{a_1}'(x)  g_{b}'(y) h_{c}'(z) \neq 0$
or $\sum_{x,y,z} F(x,y,z) f_{a_2}'(x)  g_{b}'(y) h_{c}'(z) \neq 0$, so
either $u_{a_1}+v_b+w_c$ or $u_{a_2}+v_b+w_c$ is less than
$u_{\avg}+v_{\avg}+w_{\avg}$. Thus in either case $u_{a_2}+v_b+w_c$ is less
than $ u_{\avg}+v_{\avg}+w_{\avg}$ as it is at most $u_{a_1}+v_b+w_c$. The
same is true for the $g_b$ and $h_c$.

We can now proceed to the proof.
For the sake of contradiction, assume that $F$ is a diagonal and unstable.
Without loss of generality, assume that $X=Y=Z=[n]$ and that the support of
$F$ is $\{(i,i,i) : i \in [n] \}$.

View the functions $f_a'\colon [n]\to \F$ as $n$-dimensional vectors. Using changes of variables of the aforementioned form, we may perform Gaussian
elimination until no two $f_a'$ have their first nonzero entries in the same
position. In that case, each element of $[n]$ is the position of the first
nonzero entry of some $f_a'$, so we may index the $f_a'$ by these positions.
Do the same for the $g_b'$, but for the $h_c'$, perform Gaussian elimination
so that the \emph{last} nonzero entries have distinct positions, and index the
$h_c'$ by these positions.

Because $F$ is diagonal,
\[
\sum_{x,y,z} F(x,y,z) f_{a}'(x) g_{b}'(y) h_{c}'(z) = \sum_{x} F(x,x,x)
f_a'(x) g_b'(x) h_c'(x)
\]
and $F(x,x,x) \neq 0$. Now for each index  $i \in [n]$, since $i$ is the
position of the first nonzero entry of $f_i'$ and $g_i'$ and the last
nonzero entry of $h_i'$,
\[ f_i'(x) g_i'(x) h_i'(x)=\begin{cases}f_i'(i) g_i'(i)h_i'(i) \neq 0&\text{if }x=i,\text{ and }\\0&\text{otherwise.}\end{cases}\]
This means
\[
\sum_{x} F(x,x,x) f_i'(x) g_i'(x) h_i'(x)= F(i,i,i) f_i'(i) g_i'(i)h_i'(i) \neq 0,
\]
which implies $u_i+v_i+w_i < u_{\avg} + v_{\avg}+w_{\avg}$. Summing over $i$, we have
\[\sum_{i=1}^n u_i + \sum_{i=1}^n v_i + \sum_{i=1}^n w_i < n u_{\avg} + n
v_{\avg} + n w _{\avg} = \sum_{i=1}^n u_i + \sum_{i=1}^n v_i + \sum_{i=1}^n
w_i, \] which is a contradiction, so $F$ is not unstable.
\end{proof}

Lemma~\ref{lem:diag} has the following important consequence. If $H$ is any
abelian group and $\F$ is any field, let $D_H\colon H\times H\times H\to \F$
be the tensor encoding the group structure (corresponding to multiplication
in the group algebra $\F H$), defined by
\[D_H(x,y,z)=\begin{cases}1&\text{if }x+y+z=0, \text{ and}\\0&\text{if }x+y+z\neq 0.\end{cases}\]

\begin{proposition}
\label{pr:sumfree-slicerank} If $M$ is a tricolored sum-free set in an
abelian group $H$, then
\[
{\abs{M}\leq \slicerank(D_H)}.
\]
\end{proposition}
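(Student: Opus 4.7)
The plan is to reduce the statement to Lemma~\ref{lem:diag} by restricting the group multiplication tensor $D_H$ to the coordinate slices determined by the tricolored sum-free set.

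More precisely, let $M \subseteq S \times T \times U$ be a tricolored sum-free set, so that $|S|=|T|=|U|=|M|$ and the projections of $M$ onto each factor are bijections. I would consider the restricted function $F = D_H|_{S \times T \times U}\colon S \times T \times U \to \F$. By the defining property of a tricolored sum-free set, $F(s,t,u) = 1$ when $(s,t,u) \in M$ and $F(s,t,u) = 0$ otherwise. Since the projections of $M$ to each factor are bijections, the support of $F$ is a perfect matching on $S$, $T$, $U$, so $F$ is a diagonal in the sense defined just before Lemma~\ref{lem:diag}. That lemma then gives $\slicerank(F) = |M|$.

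The remaining step is monotonicity of slice rank under restriction: if $F'\colon X \times Y \times Z \to \F$ and $X' \subseteq X$, $Y' \subseteq Y$, $Z' \subseteq Z$, then $\slicerank(F'|_{X' \times Y' \times Z'}) \le \slicerank(F')$. This is immediate from the definition \eqref{eq:slicerank}, since restricting each of the functions $f_i, g_i$ appearing in a slice-rank decomposition of $F'$ to the subsets $X'$, $Y'$, $Z'$ yields a slice-rank decomposition of the restriction, of the same length. Applying this with $F' = D_H$ and $(X',Y',Z') = (S,T,U)$ gives
\[
|M| = \slicerank(F) = \slicerank(D_H|_{S \times T \times U}) \le \slicerank(D_H),
\]
as desired.

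No genuine obstacle arises: the proposition is essentially the observation that slice rank is a useful complexity measure precisely because it is monotone under restriction and pins down the size of a diagonal. The only point requiring care is verifying that the restricted tensor truly has the diagonal form demanded by Lemma~\ref{lem:diag} (nonzero entries exactly on a perfect matching), and this is built directly into Definition~\ref{def:tri-sum-free}.
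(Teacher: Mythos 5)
Your proof is correct and is essentially identical to the paper's: restrict $D_H$ to $S\times T\times U$, observe that the tricolored sum-free condition makes this restriction a diagonal, apply Lemma~\ref{lem:diag}, and conclude via monotonicity of slice rank under restriction. No issues.
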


\begin{proof}
When we restrict a function $F\colon X\times Y\times Z\to \F$ to subsets
$X_0,Y_0,Z_0$, we can also restrict any decomposition as in
\eqref{eq:slicerank}, showing that
\[\slicerank(F|_{X_0\times Y_0\times Z_0})\leq \slicerank(F).\]
A set $M\subseteq S\times T\times U$ on subsets $S,T,U\subseteq H$ is a
tricolored sum-free set if and only if the restriction $D_H|_{S\times
T\times U}$ is a diagonal; if it is, Lemma \ref{lem:diag} implies that
\[\abs{M}=\slicerank(D_H|_{S\times T\times U})\leq \slicerank(D_H).\qedhere\]
\end{proof}

This means we can bound the size of tricolored sum-free sets in an abelian
group $H$ by bounding the slice rank of $D_H$ from above.

\begin{remark}
With the notion of slice rank in hand, it is natural to wonder whether upper
bounds on slice rank could be applied directly to matrix multiplication to
get upper bounds on its tensor rank. Indeed, since $\tensorrank(\langle
n,n,n \rangle) \leq n^2 \cdot \slicerank(\langle n,n,n \rangle)$ by
\eqref{eq:tensorrankmax}, upper bounds of $O(n^{\delta})$ on the slice
rank of $n \times n$ matrix multiplication would imply $\omega \leq
2+\delta$. However, no nontrivial upper bounds on $\omega$ can be achieved
this way: it is known that the matrix multiplication tensor is not unstable
\cite[Theorem~5.2]{BI}, so Theorem~\ref{thm:unstable1} implies that
$\slicerank(\langle n,n,n \rangle) = n^2$. (However, the topic of the slice
rank of matrix multiplication itself is orthogonal to the main results of
this paper.)
\end{remark}

\subsection{Upper bounds on slice rank}
The following result gives us upper bounds on slice rank.

\begin{theorem}
\label{thm:unstable2} If $F$ is unstable, then for any $n\geq 1$,
\[\slicerank(F^{\otimes
n})\leq(\abs{X}^n+\abs{Y}^n+\abs{Z}^n)e^{-2n\instability(F)^2}.\]
\end{theorem}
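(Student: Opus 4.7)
The plan is to take a near-optimal decomposition witnessing the instability of $F$, expand its $n$-th tensor power, and split the resulting expression into three slice-type pieces whose sizes are controlled by Hoeffding's inequality.

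Fix $\varepsilon$ slightly below $\instability(F)$ so that Definition~\ref{def:instability} supplies bases $\{f_a\}_{a \in A}$, $\{g_b\}_{b \in B}$, $\{h_c\}_{c \in C}$ with $\abs{A}=\abs{X}$, $\abs{B}=\abs{Y}$, $\abs{C}=\abs{Z}$, together with weights $u_a,v_b,w_c$ and coefficients $r_{a,b,c}$ such that
\[
F(x,y,z) = \sum_{u_a+v_b+w_c \leq R} r_{a,b,c}\, f_a(x) g_b(y) h_c(z),
\]
where $R = u_{\avg}+v_{\avg}+w_{\avg} - \varepsilon(\Delta_u + \Delta_v + \Delta_w)$ with $\Delta_u \coloneq u_{\max} - u_{\min}$ (and similarly for $\Delta_v,\Delta_w$). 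Expanding the $n$-fold tensor power gives
\[
F^{\otimes n}(\mathbf{x},\mathbf{y},\mathbf{z}) = \sum_{\mathbf{a},\mathbf{b},\mathbf{c}} \Bigl(\prod_{i=1}^n r_{a_i,b_i,c_i}\Bigr) \prod_{i=1}^n f_{a_i}(x_i) g_{b_i}(y_i) h_{c_i}(z_i),
\]
and every nonzero term satisfies $\sum_i (u_{a_i}+v_{b_i}+w_{c_i}) \leq nR$. By pigeonhole, each such multi-index must satisfy at least one of
\[
\sum_i u_{a_i} \leq n u_{\avg} - n\varepsilon \Delta_u, \quad \sum_i v_{b_i} \leq n v_{\avg} - n\varepsilon \Delta_v, \quad \sum_i w_{c_i} \leq n w_{\avg} - n\varepsilon \Delta_w,
\]
since otherwise the three lower bounds would jointly exceed $nR$. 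Partition the nonzero multi-indices into three sets according to which of these inequalities holds (breaking ties arbitrarily).

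The first set contributes $\sum_{\mathbf{a} \in S_X} \bigl(\prod_i f_{a_i}(x_i)\bigr) G_{\mathbf{a}}(\mathbf{y},\mathbf{z})$ to $F^{\otimes n}$ after grouping by $\mathbf{a}$, which is a slice decomposition of size $\abs{S_X}$, where $S_X \subseteq A^n$ collects those $\mathbf{a}$ satisfying $\sum_i u_{a_i} \leq n u_{\avg} - n\varepsilon \Delta_u$. Viewing the coordinates of $\mathbf{a}$ as i.i.d.\ uniform draws from $A$, each $u_{a_i}$ is an i.i.d.\ random variable of mean $u_{\avg}$ taking values in $[u_{\min},u_{\max}]$. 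Hoeffding's inequality therefore gives
\[
\abs{S_X} \leq \abs{A}^n \cdot \exp\!\Bigl(-\frac{2(n\varepsilon \Delta_u)^2}{n \Delta_u^2}\Bigr) = \abs{X}^n e^{-2n\varepsilon^2},
\]
with the $\Delta_u^2$ factors cancelling cleanly. The analogous bounds hold for the other two sets via grouping by $\mathbf{b}$ and $\mathbf{c}$, giving $\slicerank(F^{\otimes n}) \leq (\abs{X}^n + \abs{Y}^n + \abs{Z}^n)e^{-2n\varepsilon^2}$. Letting $\varepsilon \nearrow \instability(F)$ completes the proof.

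The argument is essentially mechanical once the partition is set up; the one piece of foresight is recognizing that Definition~\ref{def:instability} has been deliberately calibrated---via the factor $\Delta_u + \Delta_v + \Delta_w$ in the definition of $R$---so that the Hoeffding exponent $2(n\varepsilon\Delta_u)^2/(n\Delta_u^2)$ becomes independent of the weights and the sizes of $X,Y,Z$, yielding the clean bound $e^{-2n\varepsilon^2}$.
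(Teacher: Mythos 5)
Your proof is correct and follows essentially the same route as the paper's: take a near-optimal instability decomposition, expand the $n$-th tensor power, split the surviving multi-indices by pigeonhole into three slice families, and bound each family's size with Hoeffding's inequality, finally letting $\varepsilon$ tend to $\instability(F)$. The only cosmetic difference is that the paper first normalizes $u_{\min}=v_{\min}=w_{\min}=0$ so that the ranges become $u_{\max},v_{\max},w_{\max}$, whereas you carry $\Delta_u,\Delta_v,\Delta_w$ explicitly; the Hoeffding computation is identical.
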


We remark that in the most common case when $\abs{X}=\abs{Y}=\abs{Z}$, this
bound is always nontrivial for sufficiently large $n$. However if the sets
have unequal sizes, this only improves on the trivial bound
$\slicerank(F^{\otimes n})\leq\min(\abs{X}^n,\abs{Y}^n,\abs{Z}^n)$ if
$\instability(F)$ is sufficiently large.

\begin{proof}
It suffices to prove for all $\varepsilon<\instability(F)$ that
\[\slicerank(F^{\otimes
n})\leq(\abs{X}^n+\abs{Y}^n+\abs{Z}^n)e^{-2n\varepsilon^2}.\] Given such an
$\varepsilon$, we can choose functions $f_a,g_b,h_c$, weights $u_a,v_b,w_c$,
and coefficients $r_{a,b,c}$ as in Definition~\ref{def:instability} indexed
by $a\in A$, $b\in B$, and $c\in C$ such that
\[F(x,y,z)=\sum_{u_a+v_b+w_c\leq R}r_{a,b,c}f_a(x)g_b(y)h_c(z)\]
with $R$ as in \eqref{eq:unstableR}. Assume without loss of generality that
$u_{\min}=v_{\min}=w_{\min}=0$, and let us define bounds
$u_{\bound}=u_{\avg}-\varepsilon u_{\max}$, and similarly for $v_{\bound}$
and $w_{\bound}$, so that
\[R=u_{\avg}+v_{\avg}+w_{\avg}-\varepsilon(u_{\max}+v_{\max}+w_{\max})=u_{\bound}+v_{\bound}+w_{\bound}.\]

For $\a\in A^n$ and $\x\in X^n$, define $f_{\a}(\x)=\prod_{i=1}^n
f_{a_i}(x_i)$ and $u_{\a}=u_{a_1}+\cdots+u_{a_n}\in \R$; similarly define
$g_{\b}(\y)$, $v_\b$, $h_{\c}(\z)$, $w_{\c}$, and $r_{\a,\b,\c}=\prod_{i=1}^n
r_{a_i,b_i,c_i}$. Then \[F^{\otimes n}(\x,\y,\z)=\sum_{(\a,\b,\c)}
\prod_{i=1}^n
r_{a_i,b_i,c_i}f_{a_i}(x_i)g_{b_i}(y_i)h_{c_i}(z_i)=\sum_{(\a,\b,\c)}r_{\a,\b,\c}f_{\a}(\x)g_{\b}(\y)h_{\c}(\z),\]
where the sum is over triples $(\a,\b,\c)$ of tuples $\a\in A^n,\b\in
B^n,\c\in C^n$ satisfying the condition $u_{a_i}+v_{b_i}+w_{c_i}\leq
R=u_{\bound}+v_{\bound}+w_{\bound}$ for each $i=1,\dots,n$. This condition
implies $u_{\a}+v_{\b}+w_{\c}\leq nR=n(u_{\bound}+v_{\bound}+w_{\bound})$, so
each such triple must satisfy at least one of the inequalities $u_{\a}\leq
nu_{\bound}$, $v_{\b}\leq n v_{\bound}$, or $w_{\c}\leq n w_{\bound}$.
Therefore we can collect terms above to write
\[F^{\otimes n}(\x,\y,\z)=
\sum_{u_{\a}\leq nu_{\bound}}f_{\a}(\x)E_{\a}(\y,\z)+
\sum_{v_{\b}\leq nv_{\bound}}g_{\b}(\y)E_{\b}(\x,\z)+
\sum_{w_{\c}\leq nw_{\bound}}h_{\c}(\z)E_{\c}(\x,\y).\]

This decomposition exhibits a bound on the slice rank of $F^{\otimes n}$, so
all we need to do is estimate how many terms appear in these sums. The
number of tuples $\a\in A^n$ satisfying $u_{\a}\leq n u_{\bound}=n
(u_{\avg}-\varepsilon u_{\max})$ is a classical large deviation estimate:
$u_{\a}$ is the sum of i.i.d.\ variables $u_{a_i}$ taking values in
$[0,u_{\max}]$, and we are asking how often this deviates on one side from
the mean $nu_{\avg}$ by at least $\varepsilon nu_{\max}$. Even without
knowing anything about the distribution of $u_a$, Hoeffding's inequality
\cite[Theorem~1]{H} states that this proportion is bounded by
$e^{-2n\varepsilon^2}$, so the number of such tuples $\a\in A^n$ appearing in
the first sum is at most
$\abs{A^n}e^{-2n\varepsilon^2}=\abs{X}^ne^{-2n\varepsilon^2}$. The same bound
applies to the other two sums, so we conclude that $\slicerank(F^{\otimes
n})\leq (\abs{X}^n+\abs{Y}^n+\abs{Z}^n)e^{-2n\varepsilon^2}$.
\end{proof}

The bound in Theorem~\ref{thm:unstable2} can be improved if we have more
information about the distribution of the weights, by sharpening the large
deviation estimate in the last paragraph of the proof. (If $F$ is not
symmetric between $x$, $y$, and $z$, it may also be helpful to use different
cutoffs for the different variables.) One case is very common, so we single
it out: we define the \emph{triangle rank} $\trianglerank(F)$ to be the
smallest $k$ for which there exist functions $f_a$, $g_b$, $h_c$ for
$a,b,c\in\{0,\dots,k-1\}$ such that
\begin{equation}
\label{eq:trianglerankdef}F(x,y,z)=\sum_{a+b+c<k}r_{a,b,c}f_a(x)g_b(y)h_c(z).
\end{equation}
We remark that in contrast with the definition of instability, here we do
not require that the functions $f_i$ are linearly independent or form a
basis. However, it turns out we can assume this without affecting the
triangle rank by the same argument as Remark~\ref{rem:instability}(3).

We will not use the following observation directly, but it provides a
representative example and was central to Ellenberg--Gijswijt's results in
\cite{EG}.

\begin{observation} \label{obs:triangle}
For any function $P\colon \F_p\to \F_p$ with $p$ prime, the function
$F(x,y,z)=P(x+y+z)$ has triangle rank at most $p$.
\end{observation}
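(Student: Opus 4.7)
The plan is to use the fact that over $\F_p$, every function is a polynomial of degree at most $p-1$ (by Lagrange interpolation, together with Fermat's little theorem $x^p = x$ on $\F_p$). Apply this to $P$ to write $P(t) = \sum_{d=0}^{p-1} c_d t^d$ for some coefficients $c_d \in \F_p$. Substituting $t = x+y+z$ gives
\[
F(x,y,z) = P(x+y+z) = \sum_{d=0}^{p-1} c_d (x+y+z)^d.
\]

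Next, expand each $(x+y+z)^d$ via the multinomial theorem into monomials $x^a y^b z^c$ with $a+b+c = d \leq p-1$. Combining like terms gives
\[
F(x,y,z) = \sum_{a+b+c \leq p-1} r_{a,b,c}\, x^a y^b z^c,
\]
where each $r_{a,b,c} \in \F_p$. Taking $f_a(x) = x^a$, $g_b(y) = y^b$, $h_c(z) = z^c$ indexed by $a,b,c \in \{0,1,\ldots,p-1\}$, this is exactly a decomposition of the form \eqref{eq:trianglerankdef} with $k = p$ (the condition $a+b+c \leq p-1$ is $a+b+c < p$). Hence $\trianglerank(F) \leq p$.

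There is no real obstacle here — the only content is the standard fact that functions $\F_p \to \F_p$ are polynomials of degree less than $p$, which ensures the degree bound that matches the shape of the triangle rank decomposition. The rest is a direct monomial expansion.
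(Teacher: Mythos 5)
Your proof is correct and is essentially identical to the paper's: both represent $P$ as a polynomial of degree less than $p$ and expand $P(x+y+z)$ into monomials $x^a y^b z^c$ with $a+b+c<p$. The paper's version is just terser, omitting the explicit multinomial expansion and the explicit choice of $f_a$, $g_b$, $h_c$.
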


\begin{proof}
We can represent $P$ as a polynomial of degree less than $p$. Expanding
$P(x+y+z)$ and collecting terms expresses $F(x,y,z)=P(x+y+z)$ as a linear
combination of monomials $x^ay^bz^c$. Since $\deg P<p$, each monomial that
occurs satisfies $a+b+c<p$.
\end{proof}

To analyze the bounds on slice rank resulting from triangle rank, we will
need to bound the proportion of tuples $\a\in \{0,\dots,m\}^n$ with $\sum_i
a_i\leq \frac{1}{3}mn$. The bounds used in the proof of
Theorem~\ref{thm:unstable2} would bound this proportion by $e^{-n/18}$, but in this
case the rate function can be analyzed  more carefully, leading  to the
following definition. For $m>0$ and $\alpha\in (0,\frac{1}{2})$,  let
\[
I(m,\alpha)\coloneq \sup_{\theta<0}  \left(\alpha \theta-\log\left(\frac{1-e^{(1+1/m)\theta}}{(m+1)(1-e^{\theta/m})}\right)\right).
\]

\begin{proposition}
\label{pr:Is-increasing} The proportion of tuples $\a\in \{0,\dots,m\}^n$
satisfying $\sum_i a_i\leq \alpha mn$ is at most $e^{-I(m,\alpha)n}$.
Moreover, for fixed $\alpha\in (0,\frac{1}{2})$ the function $I(m,\alpha)$
is positive, is increasing in $m$, and converges to
$\sup_{\theta<0}\big(\alpha\theta-\log\big(\frac{e^{\theta
-1}}{\theta}\big)\big)$ as $m\to \infty$.
\end{proposition}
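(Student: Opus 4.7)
The plan is to view $I(m,\alpha)$ as the Legendre transform of the cumulant generating function of $b \coloneq a/m$, where $a$ is uniform on $\{0,\ldots,m\}$, so that $b$ is uniform on $\{0, 1/m, \ldots, 1\}$. Its moment generating function
\[
g_m(\theta) \coloneq \E[e^{\theta b}] = \frac{1}{m+1}\sum_{k=0}^m e^{\theta k/m} = \frac{1 - e^{(1+1/m)\theta}}{(m+1)(1 - e^{\theta/m})}
\]
(the last equality by a geometric series) is exactly the expression inside the logarithm in the definition of $I(m,\alpha)$, so all three assertions reduce to properties of $g_m$.

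The first inequality is a standard Chernoff bound. Writing $b_i = a_i/m$, the event $\sum_i a_i \leq \alpha m n$ becomes $\sum_i b_i \leq \alpha n$, and for any $\theta<0$ the indicator is bounded above by $e^{\theta(\sum_i b_i - \alpha n)}$; taking expectations and using independence of the $b_i$ gives $\Pr \leq g_m(\theta)^n e^{-\theta \alpha n}$, and optimizing over $\theta<0$ yields the bound $e^{-I(m,\alpha) n}$. Positivity follows because $\alpha \theta - \log g_m(\theta)$ vanishes at $\theta=0$ with derivative $\alpha - \E[b] = \alpha - 1/2 < 0$ (using $\alpha < 1/2$), so the objective is strictly positive for $\theta$ slightly negative. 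For the limit $m \to \infty$, Taylor expanding $1 - e^{\theta/m} = -\theta/m + O(1/m^2)$ shows that $g_m(\theta) \to (e^\theta - 1)/\theta$, the MGF of $\mathrm{Unif}[0,1]$, uniformly on compact subsets of $\theta$; standard continuity of the Legendre transform under locally uniform convergence of strictly convex functions then gives convergence of the suprema.

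Monotonicity in $m$ is the delicate step. It suffices to prove $g_m(\theta) \geq g_{m+1}(\theta)$ for every $\theta<0$, since the suprema defining $I(m,\alpha)$ and $I(m+1,\alpha)$ will then inherit the reverse inequality. Because $e^{\theta x}$ is convex in $x$, this reduces to showing that $U_m \coloneq b$ dominates $U_{m+1}$ in the convex order. The plan is to construct an explicit coupling of $X \sim U_m$ and $Y \sim U_{m+1}$ with $\E[X \mid Y] = Y$: for $Y = k/(m+1)$ with $1 \leq k \leq m$, the point $k/(m+1)$ lies in a unique subinterval $[j_k/m, (j_k+1)/m]$ with $j_k = \lfloor mk/(m+1)\rfloor$, and one places $X$ on these two endpoints with the unique weights producing conditional mean $k/(m+1)$; the boundary values $Y \in \{0,1\}$ are sent deterministically to $X \in \{0,1\}$. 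Conditional Jensen then gives $\E[\phi(Y)] \leq \E[\phi(X)]$ for every convex $\phi$, which applied to $\phi(x) = e^{\theta x}$ is the desired inequality.

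The main obstacle will be verifying that this coupling has the correct marginal $X \sim U_m$. That reduces to checking, for each $j \in \{0,\ldots,m\}$, that the contributions to the atom $X = j/m$ coming from the (at most two) adjacent atoms of $Y$ sum to $1/(m+1)$. The verification uses only the explicit weights $\lambda_k = j_k + 1 - mk/(m+1)$ and a short case analysis on whether $j/m$ arises as the left or the right endpoint of the enclosing interval for a given atom of $Y$; it is elementary but is the one place where the arithmetic must be carried out carefully.
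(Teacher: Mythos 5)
Your argument is correct, and its two routine parts (the Chernoff bound and positivity via the derivative $\alpha-\tfrac12<0$ at $\theta=0$) coincide with the paper's. The genuine divergence is in the monotonicity step. The paper works analytically: writing $G(t,x)=h(x+1)/h(x)$ with $h(x)=(1-e^{tx})/x$, it shows $\log h$ is convex in $x$ (reducing to the elementary inequality $(1-e^{-z})^2>z^2e^{-z}$), hence $G(t,\cdot)$ is increasing, hence $\alpha\theta-\log G(\theta,1/m)$ increases pointwise in $m$ --- in fact in the \emph{real} parameter $m$. You instead prove the discrete inequality $g_m(\theta)\ge g_{m+1}(\theta)$ probabilistically, via convex-order domination of $U_{m+1}$ by $U_m$ witnessed by a martingale coupling plus conditional Jensen. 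Your coupling does work, and the arithmetic you flagged as the delicate point is in fact painless: $\lfloor mk/(m+1)\rfloor=k-1$ for $1\le k\le m$, so $\lambda_k=k/(m+1)$, and the marginal of $X$ at $j/m$ is $\frac{1}{m+2}\bigl(\frac{j+1}{m+1}+\frac{m+1-j}{m+1}\bigr)=\frac{1}{m+1}$ for $1\le j\le m-1$, with the endpoints absorbed by the deterministic assignments at $Y\in\{0,1\}$. Your route buys a conceptual explanation of the monotonicity (refining the grid shrinks the uniform law in the convex order) and the inequality $g_m\ge g_{m+1}$ for all $\theta$ with no calculus; the paper's route buys real-parameter monotonicity and, more usefully, an immediate justification of the $m\to\infty$ limit by exchanging $\sup_\theta$ with $\sup_m$. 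That limit is the one soft spot in your write-up: locally uniform convergence of $\log g_m$ on compacta does not by itself control a supremum over the non-compact set $\theta<0$, so you should either confine the maximizers to a fixed compact interval uniformly in $m$ or, more simply, reuse the pointwise monotonicity in $m$ you have just established and swap the limit with the supremum as the paper does.
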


We will prove this proposition below; although the first assertion in this
proposition is standard, we include the proof for completeness. Before
moving to the proof of the proposition, we show that bounds on the triangle rank lead to
bounds on the slice rank of tensor powers. In this case the relevant proportion
will be $\alpha=\frac{1}{3}$, so for $s>1$ let us define
\begin{equation}
J(s)\coloneq e^{-I(s-1,\frac{1}{3})}=\frac{1}{s}\inf_{0<x<1}\frac{1-x^{s}}{1-x}x^{-\frac{s-1}{3}}.
\end{equation}
(The latter expression is obtained from the definition of $I(s-1,\alpha)$ by setting
$x=e^{\theta/(s-1)}$.) We point out as a consequence of
Proposition~\ref{pr:Is-increasing} that for all $s>1$ the function $J(s)$
is decreasing, satisfies $J(s)<1$, and (with $z=e^{-\theta/3}$)
\begin{equation}
\lim_{s\to \infty} J(s)
=\inf_{z>1}\frac{z-z^{-2}}{3 \log z}
= 0.8414\ldots.
\end{equation}

\begin{proposition}
\label{pr:triangleslice} If $\abs{X}=\abs{Y}=\abs{Z}=k$ and
$\trianglerank(F)\leq k$, then not only is $\instability(F)\geq
\frac{1}{6}$, but moreover the slice rank of $F^{\otimes n}$ is at most
$3(kJ(k))^n$.
\end{proposition}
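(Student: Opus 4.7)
The plan is to extract the natural integer weights from the triangle rank decomposition of $F$ and verify both claims by direct calculation, with Proposition~\ref{pr:Is-increasing} playing the role that Hoeffding's inequality played in the proof of Theorem~\ref{thm:unstable2}. To start, I would fix a triangle rank decomposition
\[
F(x,y,z) = \sum_{a+b+c<k} r_{a,b,c} f_a(x) g_b(y) h_c(z),
\]
indexed by $a,b,c \in \{0,\dots,k-1\}$. By Remark~\ref{rem:instability}(3) these $f_a$, $g_b$, $h_c$ may be fed directly into Definition~\ref{def:instability} without forming bases, since there are at most $k = \abs{X} = \abs{Y} = \abs{Z}$ of each. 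Choose the nontrivial weights $u_a = a$, $v_b = b$, $w_c = c$; a direct computation yields $u_{\avg} = v_{\avg} = w_{\avg} = (k-1)/2$ and $u_{\max}-u_{\min} = v_{\max}-v_{\min} = w_{\max}-w_{\min} = k-1$, so that $R = \tfrac{3(k-1)}{2} - 3\varepsilon(k-1)$ in \eqref{eq:unstableR}. The triangle rank condition $a+b+c < k$ says exactly that $u_a + v_b + w_c \leq k-1$, which is $\leq R$ whenever $\varepsilon \leq 1/6$. This gives $\instability(F) \geq 1/6$.

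For the slice rank bound I would expand the tensor power
\[
F^{\otimes n}(\x,\y,\z) = \sum_{(\a,\b,\c)} r_{\a,\b,\c}\, f_{\a}(\x)\, g_{\b}(\y)\, h_{\c}(\z),
\]
the sum ranging over $\a,\b,\c \in \{0,\dots,k-1\}^n$ satisfying $a_i + b_i + c_i < k$ for every $i$. Summing these pointwise inequalities yields $\sum_i a_i + \sum_i b_i + \sum_i c_i \leq n(k-1)$, so every contributing triple satisfies at least one of $\sum_i a_i \leq (k-1)n/3$, $\sum_i b_i \leq (k-1)n/3$, or $\sum_i c_i \leq (k-1)n/3$. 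Partitioning the sum accordingly and factoring out the corresponding tensor factor (exactly as in the last display of the proof of Theorem~\ref{thm:unstable2}) exhibits a slice decomposition of $F^{\otimes n}$ whose width is at most three times the number of $\a \in \{0,\dots,k-1\}^n$ with $\sum_i a_i \leq (k-1)n/3$.

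To finish, I would apply Proposition~\ref{pr:Is-increasing} with $m = k-1$ and $\alpha = 1/3$: the number of such $\a$ is at most $k^n e^{-I(k-1,1/3)n} = (kJ(k))^n$ by the definition $J(k) = e^{-I(k-1,1/3)}$. The three sums together give $\slicerank(F^{\otimes n}) \leq 3(kJ(k))^n$, as desired.

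There is no real obstacle here: the argument mirrors that of Theorem~\ref{thm:unstable2}, and the only structural point to notice is that the uniform integer weights $u_a = a$ on $\{0,\dots,k-1\}$ allow the sharper tail bound of Proposition~\ref{pr:Is-increasing} to replace the generic Hoeffding estimate, producing $J(k)$ instead of $e^{-1/18}$. The one minor technicality, that the $f_a$ from a triangle rank decomposition need not be linearly independent, is handled by Remark~\ref{rem:instability}(3).
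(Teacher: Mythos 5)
Your proposal is correct and follows essentially the same route as the paper: integer weights $u_a=a$, $v_b=b$, $w_c=c$ give $R=k-1$ and hence $\instability(F)\geq\frac{1}{6}$, and the tensor-power decomposition from the proof of Theorem~\ref{thm:unstable2} combined with Proposition~\ref{pr:Is-increasing} (with $m=k-1$, $\alpha=\frac{1}{3}$) yields the $3(kJ(k))^n$ bound. The only cosmetic difference is that the paper handles the basis requirement by extending the $f_a$ to a basis and assigning the new elements weight $u_{\avg}$, whereas you invoke Remark~\ref{rem:instability}(3) directly; both are fine.
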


\begin{proof}
We first check the first claim. Express $F$ as in \eqref{eq:trianglerankdef}
with the $f_a$, $g_b$, and $h_c$ linearly independent. Set $u_a=a$ for all
$a=0,1,\dots,k-1$; if the $f_a$ are not a basis, extend them arbitrarily to a
basis and set $u_a=\frac{k-1}{2}=u_{\avg}$ for all $a\ge k$ (these terms will
not play any role in the decomposition). Doing the same for the other
factors exhibits $F$ as a sum as in \eqref{eq:unstable} with $R=k-1$. Since
$u_{\max}=v_{\max}=w_{\max}=R$ and $u_{\avg}=v_{\avg}=w_{\avg}=\frac{R}{2}$,
solving for $\varepsilon$ gives $\varepsilon=\frac{1}{6}$.

To bound the slice rank of $F^{\otimes n}$, simply follow the proof of
Theorem~\ref{thm:unstable2} up until the last paragraph. The relevant
estimate in this case is the number of tuples $\a\in \{0,\dots,k-1\}^n$
with $\sum_i a_i\leq \frac{1}{3}(k-1)n$. In place of the general bound of
$k^ne^{-n/18}$ obtained there, substitute the bound of
$k^ne^{-I(k-1,\frac{1}{3})n}=k^nJ(k)^n$ from
Proposition~\ref{pr:Is-increasing} to conclude that  $\slicerank(F^{\otimes
n})\leq 3 (kJ(k))^n$.
\end{proof}

\begin{proof}[Proof of Proposition~\ref{pr:Is-increasing}]
After dividing everything by $m$, the first claim in the proposition states
that if $X_1,\dots,X_n$ are independent copies of a random variable $X$
uniformly distributed on the $m+1$ values
$\{\frac{0}{m},\frac{1}{m},\dots,\frac{m}{m}\}$, then $\Pr\big(\sum_i X_i\leq
\alpha n\big)\leq e^{-I(m,\alpha)n}$. For any $\theta <0$, applying the order-reversing
transformation $x\mapsto e^{\theta x}$ shows that $\Pr\big(\sum_i
X_i\leq \alpha n\big)=\Pr\big(e^{\theta\sum_i X_i} \geq e^{\alpha n\theta}\big)$. By
Markov's inequality, the latter is at most $e^{-\alpha n \theta} \E
\big(e^{\theta \sum_i X_i}\big)=\big(e^{-\alpha \theta} \E \big(e^{\theta X}\big)\big)^n$. Setting
$q=e^{\theta/m}$, we have $\E \big(e^{\theta X}\big) =
(1+q+\cdots+q^m)/(m+1)=\frac{1-q^{m+1}}{1-q}\cdot\frac{1}{m+1}$, and this
concludes the proof of the first claim.

For the remaining claims, set $G(t,x) =
\frac{1-e^te^{tx}}{(1+1/x)(1-e^{tx})}$, and set $G(0,x)=1$ so that $G$ is
continuous at $0$. By definition $I(m,\alpha)$ is the supremum over
$\theta<0$ of $\gamma(\theta,m)=\alpha\theta-\log G(\theta,\frac{1}{m})$.
Note that $\gamma(0,m)=0$; moreover, as $t \to 0$,
\[
\begin{split}
G(t,x) &= \frac{1-e^{t(1+x)}}{(1+1/x)(1-e^{tx})}\\
& = \frac{1}{1+1/x} \left(\frac{-t(1+x) - t^2(1+x)^2/2 + O(t^3)}{-tx - t^2x^2/2 + O(t^3)}\right)\\
& = \frac{1+t(1+x)/2 + O(t^2)}{1+tx/2+O(t^2)}\\
& = 1 + t/2 + O(t^2),
\end{split}
\]
and hence $\frac{\partial \log G}{\partial t}(0,x) = \frac{\frac{\partial
G}{\partial t}(0,x)}{G(0,x)} = \frac{\partial G}{\partial t}(0,x) =
\frac{1}{2}$. Thus, the derivative $\frac{\partial \gamma}{\partial
\theta}(0,m)=\alpha-\frac{1}{2}<0$ is negative for all $\alpha\in
(0,\frac{1}{2})$. This guarantees that there exists some $\theta<0$ where
$\gamma(\theta,m)$ is positive, proving the second claim.

Furthermore, the supremum of $\gamma(\theta,m)$ is attained at some
$\theta<0$, because $\gamma(\theta,m)$ tends to $0$ as $\theta \to 0$ and
$-\infty$ as $\theta \to -\infty$, while the supremum is positive. This means
that to prove that $I(m,\alpha)$ is increasing in $m$, it suffices to prove
for each \emph{fixed} $\theta<0$ that $\gamma(\theta,m)$ is increasing in
$m$. Equivalently, we must prove that $G(t,x)$ is increasing in $x$ when
$t<0$ and $x>0$; we do this by proving that $\frac{\partial G}{\partial x}>0$
in this region.

Set $h(x)=(1-e^{tx})/x$. Note that
$\frac{h(x+1)}{h(x)}=\frac{(1-e^{t(x+1)})/(x+1)}{(1-e^{tx})/x}=G(t,x)$.
Therefore $\frac{\partial G}{\partial x}>0$ if and only if $h'(x+1)h(x)\geq
h'(x)h(x+1)$. Since $t<0$ and $x>0$ we have $h(x)>0$, so this holds if and
only if $\frac{h'(x+1)}{h(x+1)}> \frac{h'(x)}{h(x)}$. Hence it suffices to
show that $\frac{d^2}{dx^2}\log h(x)>0$, since then $\frac{d}{dx}\log
h(x)=\frac{h'(x)}{h(x)}$ is increasing. We have that $\frac{d^2}{dx^2}\log
h(x)= \frac{d}{dx}\left( -\frac{1}{x} -  \frac{te^{tx} }{1- e^{tx}} \right) =
\frac{1}{x^2} - \frac{ (1-e^{tx}) t^2 e^{tx} - t e^{tx} (-t e^{tx})}{ (1-
e^{tx})^2} =\frac{1}{x^2}-\frac{t^2e^{tx}}{(1-e^{tx})^2}$, and so the
proposition follows from the inequality $(1-e^{tx})^2> t^2x^2e^{tx}$, which
we can prove as follows.

To verify this inequality, note that $n! \leq 2^{n-1} (n-1)!$ for all $n\in
\N$ with strict inequality for all $n>2$. Hence $\sum_{n=1}^\infty
\frac{z^n}{n!} > \sum_{n=1}^\infty \frac{z^n}{2^{n-1} (n-1)!}$ for $z>0$.
Evaluating those power series shows that $e^z-1 >  z e^{z/2}$. Dividing by
$e^z$, we obtain $1-e^{-z}>ze^{-z/2}$. Squaring yields
$(1-e^{-z})^2>z^2e^{-z}$; setting $z=-tx$, this becomes the desired
inequality.

Finally, as $m\to \infty$ we have
\[
\lim_{m\rightarrow\infty}I(m,\alpha)=
\lim_{m\rightarrow\infty}\sup_{\theta<0}\left(\alpha\theta-\log G\left(\theta,{\textstyle\frac{1}{m}}\right)\right).
\]
Since $-\log G(\theta,x)$ is monotonically decreasing in $x$,
$\alpha\theta-\log G\left(\theta,{\textstyle\frac{1}{m}}\right)$ is
monotonically increasing in $m$, so the limit in $m$ is the same as the
supremum in $m$ and hence commutes with the supremum in $\theta$. Thus we may
switch the order of the limit and the supremum to obtain
\[
\lim_{m\rightarrow\infty}I(m,\alpha)=\sup_{\theta<0}\big(\alpha\theta-\lim_{m\rightarrow\infty}\log G\left(\theta,{\textstyle\frac{1}{m}}\right)\big).
\]
By the continuity of the logarithm and the variable change $\frac{1}{m}=x$ this
equals
\[
\sup_{\theta<0}\left(\alpha\theta-\log\Big(\lim_{x\rightarrow0}{\textstyle\frac{h(x+1)}{h(x)}}\Big)\right),
\]
where $h(x)=\left(1-e^{\theta x}\right)/x$. Since
$\lim_{x\rightarrow0}h(x)=-\theta$, it follows that
\[
\lim_{m\rightarrow\infty}I(m,\alpha)=\sup_{\theta<0}\alpha\theta-\log\big({\textstyle \frac{e^{\theta }-1}{\theta}}\big).\qedhere
\]
\end{proof}

\subsection{Tricolored sum-free sets in abelian groups of bounded exponent}
\label{sec:boundedexp-proof}

In this section, we prove Theorems~\ref{thm:boundedexponent} and~\ref{thm:Zqn}, by way of the
following sharper bound.

\begin{theorem}
\label{thm:Zm} If $H$ is an abelian group $H\iso (\Z/q\Z)^n\times G$ where
$q$ is a prime power, then every tricolored sum-free set in $H$ has
cardinality at most $3\cdot \abs{H}\cdot J(q)^n$.
\end{theorem}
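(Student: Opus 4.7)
The plan is to work over $\F = \F_p$ (where $q = p^k$) and combine the multiplicativity of slice rank under tensor products with an explicit triangle rank bound for the single-factor tensor $D_{\Z/q\Z}$. Since $H \iso (\Z/q\Z)^n \times G$ is a product of abelian groups, the group multiplication tensor factors as $D_H = D_{\Z/q\Z}^{\otimes n} \otimes D_G$. Combining Proposition~\ref{pr:sumfree-slicerank} with the second bound of Proposition~\ref{pr:slicerankmult}, any tricolored sum-free set $M \subseteq H$ satisfies
\[
\abs{M} \leq \slicerank(D_H) \leq \slicerank\bigl(D_{\Z/q\Z}^{\otimes n}\bigr) \cdot \abs{G},
\]
so it suffices to bound the slice rank of $D_{\Z/q\Z}^{\otimes n}$.

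The heart of the proof is to establish that $\trianglerank(D_{\Z/q\Z}) \leq q$; Proposition~\ref{pr:triangleslice} then upgrades this to $\slicerank(D_{\Z/q\Z}^{\otimes n}) \leq 3(qJ(q))^n$. The idea, generalizing Observation~\ref{obs:triangle}, is to replace the polynomial expansion of $\delta_0$ by a Mahler binomial expansion. Identify $\Z/q\Z$ with $\{0,1,\dots,q-1\}$ and consider the $\F_p$-valued functions $\phi_i(s) = \binom{s}{i} \bmod p$ for $0 \leq i < q$. By Kummer's theorem, $\binom{q}{m} \equiv 0 \pmod{p}$ for $0 < m < q$; combined with the Vandermonde identity $\binom{s+q}{i} = \sum_{j=0}^{i} \binom{q}{i-j}\binom{s}{j}$, this shows each $\phi_i$ is well-defined on $\Z/q\Z$. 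The triangular matrix $\bigl(\binom{j}{i}\bigr)_{0\leq i,j<q}$ has unit diagonal, so $\{\phi_0,\dots,\phi_{q-1}\}$ is a basis for the space of $\F_p$-valued functions on $\Z/q\Z$. Expanding $\delta_0 = \sum_{i=0}^{q-1} a_i \phi_i$ and applying Vandermonde once more gives
\[
D_{\Z/q\Z}(x,y,z) = \delta_0(x+y+z) = \sum_{i=0}^{q-1} a_i \sum_{a+b+c=i} \binom{x}{a}\binom{y}{b}\binom{z}{c},
\]
which is exactly a triangle-rank-$q$ decomposition since every monomial satisfies $a+b+c \leq q-1 < q$.

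Assembling the ingredients yields $\slicerank(D_H) \leq 3(qJ(q))^n \cdot \abs{G} = 3\abs{H} J(q)^n$, using $\abs{H} = q^n \abs{G}$, which is the claim. The main obstacle is the well-definedness of the binomial basis modulo $p$: this relies crucially on $q$ being a prime \emph{power}, so that Kummer's theorem cleanly kills the intermediate coefficients $\binom{q}{m}$. This restriction is what ultimately forces the separate treatment of Theorem~\ref{thm:boundedexponent}, where a general abelian group of bounded exponent must first be split as a prime-power factor times a complement, with the complement absorbed using the $\abs{G}$ factor in Proposition~\ref{pr:slicerankmult}.
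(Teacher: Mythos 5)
Your proposal is correct and follows essentially the same route as the paper: factor $D_H = D_{\Z/q\Z}^{\otimes n}\otimes D_G$, bound $\trianglerank(D_{\Z/q\Z})\leq q$ via binomial coefficients that descend to well-defined functions $\Z/q\Z\to\F_p$, and combine Propositions~\ref{pr:slicerankmult}, \ref{pr:triangleslice}, and~\ref{pr:sumfree-slicerank}. The only (harmless) difference is in Proposition~\ref{pr:power-of-p}: the paper shifts $z\mapsto z+1$ and observes that $\binom{w}{q-1}$ is already the indicator of $w=q-1$, so a single top-degree term suffices, whereas you expand $\delta_0$ in the full Mahler basis $\{\binom{\cdot}{i}\}_{i<q}$ --- both yield triangle rank at most $q$.
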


The upper bound in Theorem~\ref{thm:Zm} is known to be sharp up to a
subexponential factor by \cite[Theorem~2]{KSS}, conditional on a conjecture,
namely \cite[Conjecture~3]{KSS}, that was later proved independently in
\cite{Norin} and \cite{Pebody}.

The proofs of these theorems depend on the following proposition.

\begin{proposition}
\label{pr:power-of-p}
If $q=p^r$ is a prime power, the triangle rank of $D_{\Z/q\Z}$ over $\F_p$ is at most $q$.
\end{proposition}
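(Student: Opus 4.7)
The plan is to give an explicit triangle-rank-$q$ decomposition of $D_{\Z/q\Z}$ using binomial coefficients, mimicking the polynomial trick in Observation~\ref{obs:triangle} but working in characteristic $p$ where only the $p$-th power (not the $q$-th power) of an element equals itself. Identify $\Z/q\Z$ with $\{0,1,\dots,q-1\}$, and think of the values of $x$, $y$, $z$ as integers in this range, so that $x+y+z$ ranges over the integers $\{0,1,\dots,3q-3\}$ and equals $0$ in $\Z/q\Z$ if and only if $x+y+z\in\{0,q,2q\}$ as an integer.

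The first step is to establish, for all $x,y,z\in\{0,\dots,q-1\}$, the key identity
\[
D_{\Z/q\Z}(x,y,z) \;=\; \binom{x+y+z+q-1}{q-1} \pmod{p}.
\]
I would prove this by Lucas's theorem: writing $q-1 = (p-1)(p-1)\cdots(p-1)$ in base $p$ (an $r$-digit expansion with every digit equal to $p-1$), Lucas gives
\[
\binom{N+q-1}{q-1} \equiv \prod_{i=0}^{r-1} \binom{d_i(N+q-1)}{p-1} \pmod p,
\]
where $d_i$ extracts the $i$th base-$p$ digit (higher digits of $N+q-1$ contribute factors $\binom{\cdot}{0}=1$). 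Since $\binom{k}{p-1}\bmod p$ vanishes unless $k=p-1$, the product is nonzero precisely when the bottom $r$ digits of $N+q-1$ are all $p-1$, i.e.\ when $N\equiv 0\pmod q$; and in that case each factor equals $1$, so the value is $1$.

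The second step is to expand the binomial via Vandermonde's identity in four variables:
\[
\binom{x+y+z+(q-1)}{q-1} \;=\; \sum_{\substack{a,b,c,d\ge 0\\ a+b+c+d = q-1}} \binom{x}{a}\binom{y}{b}\binom{z}{c}\binom{q-1}{d}.
\]
Setting $r_{a,b,c}=\binom{q-1}{q-1-a-b-c}$ (which is well-defined exactly when $a+b+c\le q-1$) and taking $f_a(x)=\binom{x}{a}$, $g_b(y)=\binom{y}{b}$, $h_c(z)=\binom{z}{c}$ for $a,b,c\in\{0,\dots,q-1\}$, this rewrites $D_{\Z/q\Z}$ as
\[
D_{\Z/q\Z}(x,y,z) = \sum_{a+b+c<q} r_{a,b,c}\, f_a(x) g_b(y) h_c(z),
\]
which is exactly a triangle-rank-$q$ decomposition in the sense of \eqref{eq:trianglerankdef}.

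There isn't much of a technical obstacle here beyond verifying the Lucas-theorem calculation carefully; the heart of the argument is recognizing that $q-1=p^r-1$ has the maximally digit-saturated base-$p$ expansion, which makes $\binom{\cdot+q-1}{q-1}$ behave exactly like $\delta_0(\cdot \bmod q)$ over $\F_p$, and that Vandermonde then separates the variables with the correct degree budget $a+b+c\le q-1$. This replaces the role played by the polynomial representation of $P\colon \F_p\to\F_p$ in Observation~\ref{obs:triangle}, which is unavailable when $q$ is a nontrivial prime power because elements of $\Z/p^r\Z$ are not themselves elements of a field of characteristic $p$.
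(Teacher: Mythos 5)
Your proof is correct and follows essentially the same route as the paper's: both realize the delta function as a binomial coefficient $\binom{\cdot}{q-1}$ reduced mod $p$ via Lucas' theorem, and then separate the variables with Vandermonde's identity to obtain the triangle-rank-$q$ decomposition. The only difference is cosmetic: the paper shifts the variable $z$ to $z+1$ so that the target becomes $x+y+z=q-1$ and uses Lucas to show $m\mapsto\binom{m}{k}$ descends to $\Z/q\Z$, whereas you keep the target at $0$, shift the argument of the binomial by $q-1$, and absorb that shift into a fourth Vandermonde factor $\binom{q-1}{d}$ which supplies the coefficients $r_{a,b,c}$.
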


\begin{proof}
The triangle rank is invariant under any permutation of the sets $X,Y, Z$,
so it suffices to bound the triangle rank of the function
\[
D_{\Z/q\Z}(x,y,z+1)=\begin{cases}1\in \F_p&\text{if }x+y+z=q-1\in \Z/q\Z, \text{ and}\\0\in \F_p&\text{if }x+y+z\neq q-1\in \Z/q\Z.\end{cases}
\]

As a consequence of Lucas' theorem, for any $0\leq k<q$ and $m,m'\in \N$,
\[
m\equiv m'\pmod{q}\quad\implies\quad \binom{m}{k}\equiv\binom{m'}{k}\pmod{p}.
\]
In other words, $m\mapsto \binom{m}{k}$ descends to a well-defined function
$\Z/q\Z\to \F_p$. We claim that
\[D_{\Z/q\Z}(x,y,z+1)=\sum_{a+b+c=q-1}\binom{x}{a}\binom{y}{b}\binom{z}{c}.\]
Indeed, the identity
$\binom{X+Y+Z}{i}=\sum_{a+b+c=i}\binom{X}{a}\binom{Y}{b}\binom{Z}{c}\in \N$
for $X,Y,Z\in \N$ descends to an identity
$\binom{x+y+z}{i}=\sum_{a+b+c=i}\binom{x}{a}\binom{y}{b}\binom{z}{c}\in \F_p$
for $x,y,z\in \Z/q\Z$. But for any $w\in \Z/q\Z$, choosing a lift in
$\{0,\dots,q-1\}$ shows that
\[\binom{w}{q-1} =\begin{cases}1\in \F_p&\text{if }w=q-1\in \Z/q\Z, \text{ and}\\0\in \F_p&\text{if }w\neq q-1\in
\Z/q\Z.
\end{cases}\]
This decomposition of $D_{\Z/q\Z}(x,y,z+1)$ shows that its triangle rank is
at most $q$.
\end{proof}

\begin{proof}[Proof of Theorem~\ref{thm:Zm}]
Let $q=p^r$, and consider the tensor $D_H$ over $\F_p$. The decomposition
$H\iso (\Z/q\Z)^n\times G$ induces a decomposition
$D_H=D_{(\Z/q\Z)^n}\otimes D_G$. Proposition~\ref{pr:slicerankmult} thus
implies
\[\slicerank(D_H)\leq \slicerank(D_{(\Z/q\Z)^n})\cdot \abs{G}.\]
We proved in Proposition~\ref{pr:power-of-p}  that
$\trianglerank(D_{\Z/q\Z})\leq q$. By Proposition~\ref{pr:triangleslice},
this implies that $\slicerank(D_{(\Z/q\Z)^n})\leq 3 (qJ(q))^n$. We conclude
\[\slicerank(D_H)\leq 3(qJ(q))^n\cdot \abs{G}=3\cdot \abs{H}\cdot J(q)^n.\]
The bound on tricolored sum-free sets in $H$ then follows from
Proposition~\ref{pr:sumfree-slicerank}.
\end{proof}

\begin{proof}[Proof of Theorem~\ref{thm:Zqn}]
If $q$ is a prime power,  Theorem~\ref{thm:Zm} states that  the size of
any tricolored sum-free set in $H\iso (\Z/q\Z)^{n}$ is at
most $3 \cdot\abs{H}\cdot J(q)^n$. To get a uniform bound we recall that
$J(q)$ is decreasing by Proposition~\ref{pr:Is-increasing},
so the worst case for this bound  occurs when the prime power $q$ is $2$. Therefore if we set
\[
\delta=I(1,{\textstyle\frac{1}{3}})=\sup_{\theta<0}\left({\textstyle\frac{\theta}{3}}-\log\left(\frac{e^{2\theta}-1}{2e^{\theta}-2}\right)\right)
=\log\big((2/3)2^{2/3}\big) = 0.05663\ldots,
\]
then $J(q)\leq J(2)=e^{-\delta}=q^{-\frac{\delta}{\log q}}$.
The size of any tricolored sum-free set in $H\iso (\Z/q\Z)^{n}$ is thus at most
\[ 3\cdot \abs{H}\cdot J(q)^n\leq 3\cdot \abs{H}\cdot J(2)^n=3 q^n e^{-\delta n}=3 \cdot \abs{H}^{1-\frac{\delta}{\log q}}.\qedhere\]
\end{proof}

\begin{proof}[Proof of Theorem~\ref{thm:boundedexponent}]
Let $q_1,\dots,q_r$ denote all the prime powers less than or equal to $m$.
By the Chinese remainder theorem, we can write $H\iso \prod_{i=1}^r
(\Z/q_i\Z)^{n_i}$. Let $\ell$ be the index for which $n_\ell$ is largest.
Writing $\sum_i n_i=N$, we know that $n_\ell\geq N/r$. Since  $\sum_i n_i \log
q_i=\log \abs{H}$ and $q_i\leq m$ we have $N\geq \frac{\log \abs{H}}{\log
m}$.

Next, we claim that $r<\frac{2m}{\log m}$ for all $m>1$. By \cite[(3.2) and
(3.36)]{RosserSchoenfeldApproximateFormulasForPrimes} we have that
$r<\frac{m}{\log m}+\frac{3m}{2(\log m)^2}+\frac{1.4260 m^{1/2}}{\log 2}$.
This bound is at most $\frac{2m}{\log m}$ as long as $m\geq 242$. For the
remaining $1<m<242$, one can simply check directly that the number of prime
powers less than $m$ is less than $\frac{2m}{\log m}$.

Combining these bounds on $N$ and $r$ shows that $n_\ell\geq \frac{N}{r}\geq
\frac{\log \abs{H}}{2m}$. By Theorem~\ref{thm:Zm}, this implies the size of
any tricolored sum-free set in $H\iso (\Z/q_\ell\Z)^{n_\ell}\times G$ is at
most
\[
3\cdot \abs{H}\cdot J(q_\ell)^{n_\ell}\leq 3\cdot \abs{H}\cdot J(q_\ell)^{\log \abs{H}/(2m)}=3\cdot \abs{H}^{1+\log J(q_{\ell})/(2m)}.
\]
As in the proof of Theorem~\ref{thm:Zqn} above, the worst case for this bound is when the prime power $q_\ell$ is $2$; since $J(s)$ is decreasing, we can bound $\log J(q_\ell)\leq \log J(2)=-\delta$. Setting
\[
\varepsilon=\delta/2
=\log\big((2/3)2^{2/3}\big)/ 2 = 0.02831\ldots,
\]
the bound from Theorem~\ref{thm:Zm} then implies the size of any tricolored
sum-free set in $H$ is at most $3\cdot \abs{H}^{1+\log J(q_{\ell})/(2m)}\leq
3\cdot \abs{H}^{1-\varepsilon/m}$.
\end{proof}

\section*{Acknowledgements}

We thank the AIM SQuaRE program, the Santa Fe Institute, and Microsoft
Research for hosting visits.

\bibliographystyle{amsplain}

\begin{dajauthors}
\begin{authorinfo}[Blasiak]
Jonah Blasiak\\
Department of Mathematics\\
Drexel University\\
Philadelphia, PA 19104\\
\texttt{jblasiak@gmail.com}
\end{authorinfo}
\begin{authorinfo}[Church]
Thomas Church\\
Department of Mathematics\\
Stanford University\\
450 Serra Mall\\
Stanford, CA 94305\\
\texttt{tfchurch@stanford.edu}
\end{authorinfo}
\begin{authorinfo}[Cohn]
Henry Cohn\\
Microsoft Research New England\\
One Memorial Drive\\
Cambridge, MA 02142\\
\texttt{cohn@microsoft.com}
\end{authorinfo}
\begin{authorinfo}[Grochow]
Joshua A.\ Grochow\\
University of Colorado at Boulder \\
Department of Computer Science \\
1111 Engineering Drive \\
Boulder, CO 80309 \\
and \\
Santa Fe Institute\\
1399 Hyde Park Rd.\\
Santa Fe, NM 87501\\
\texttt{jgrochow@santafe.edu}
\end{authorinfo}
\begin{authorinfo}[Naslund]
Eric Naslund\\
Mathematics Department\\
Princeton University\\
Fine Hall, Washington Road\\
Princeton, NJ 08544\\
\texttt{naslund@math.princeton.edu}
\end{authorinfo}
\begin{authorinfo}[Sawin]
William F.\ Sawin\\
ETH Institute for Theoretical Studies \\
ETH Z\"{u}rich \\
8092 Z\"{u}rich \\
\texttt{william.sawin@math.ethz.ch}
\end{authorinfo}
\begin{authorinfo}[Umans]
Chris Umans\\
Computing and Mathematical Sciences\\
Caltech\\
1200 E.\ California Blvd.\\
Pasadena, CA 91125\\
\texttt{umans@cms.caltech.edu}
\end{authorinfo}
\end{dajauthors}
\end{document}